\documentclass{article}
\providecommand{\keywords}[1]{\textbf{\textit{Key words.}} #1}

\usepackage{lipsum}
\usepackage{amsfonts}
\usepackage{graphicx}
\usepackage{epstopdf}
\usepackage{standalone}
\usepackage{algcompatible}

\ifpdf
  \DeclareGraphicsExtensions{.eps,.pdf,.png,.jpg}
\else
  \DeclareGraphicsExtensions{.eps}
\fi

\usepackage{amsopn}

\usepackage{epsfig} 
\usepackage{amsmath} 
\usepackage{amssymb}  
\DeclareMathAlphabet{\mathcal}{OMS}{cmsy}{m}{n}
\usepackage[english]{babel}
\usepackage{url}
\hyphenation{op-tical net-works semi-conduc-tor}
\usepackage{enumitem}
\usepackage{bbm}
\usepackage{caption}
\usepackage{algorithm}
\usepackage{color}
\usepackage{pgfplots}
\usepackage{siunitx}
\usepackage{tikz}
\usepackage{tkz-euclide}
\usepackage{chngcntr}
\usepackage[titletoc]{appendix}
\usepackage{verbatim}

\definecolor{ao(english)}{rgb}{0.0, 0.5, 0.0}
\usepackage[colorlinks, citecolor = {ao(english)}, linkcolor = {ao(english)}]{hyperref} 
\usepackage{cleveref}
\usepackage{aliascnt}
\usetikzlibrary{calc}
\usepackage{siunitx}
\usepackage{subcaption}
\usepackage{tikz}
\usepackage{pgfplots}
\usetikzlibrary{arrows,shapes,trees,calc,positioning,patterns,decorations.pathmorphing,decorations.markings}
\usetikzlibrary{matrix}
\usepackage{caption}
\usepgfplotslibrary{groupplots}
\pgfplotsset{compat=newest}
\usepackage{amsthm}


\newtheorem{thm}{Theorem}
\crefname{thm}{Theorem}{Theorems}

\newlist{thmenum}{enumerate}{1} 
\setlist[thmenum]{label=\roman*), ref=\thethm~.\roman*.}
\crefalias{thmenumi}{thm}

\newtheorem{prop}{Proposition}
\crefname{prop}{Proposition}{Propositions}
\newtheorem{lem}{Lemma}
\crefname{lem}{Lemma}{Lemmas}

\crefname{cor}{Corollary}{Corollaries}
\theoremstyle{remark}
\newtheorem{rem}{Remark}
\crefname{rem}{Remark}{Remark}

\crefname{ass}{Assumption}{Assumption}
\usepackage{dsfont}
\let\mathbb=\mathds
\usepackage[numbers,sort&compress]{natbib}

\crefname{conj}{Conjecture}{Conjectures}

\theoremstyle{definition}
\newtheorem{defn}{Definition}
\crefname{defn}{Definition}{Definitions}
\newlist{defnenum}{enumerate}{1} 
\setlist[defnenum]{label=\roman*., ref=\thedefn~(\roman*.)}
\crefalias{defnenumi}{defn}

\crefname{prob}{Problem}{Problems}
\crefname{algorithm}{Algorithm}{Algorithms}

\newcommand{\Rmn}{\mathbb{R}^{n \times m}}

\newcommand{\rk}{\textnormal{rank}}

\newcommand{\argmin}{\operatornamewithlimits{argmin}}

\newcommand{\trace}{\textnormal{trace}}
\newcommand{\inter}{\textnormal{int}}

\newcommand{\minmn}{\min \{ m,n \}}

\newcommand{\prox}{\textnormal{prox}}
\newcommand{\proj}{\Pi}

\newcommand{\opts}{\star}

\newcommand{\dom}{\textnormal{dom}}

\newcommand{\epi}{\textnormal{epi}}
\newcommand{\transp}{\mathsf{T}}

\newcommand{\normrgast}[1]{\left\|{} #1 {} \right\|_{g,r\ast}}
\newcommand{\normrg}[1]{\left \| {} #1{}\right \|_{g^D,r}}
\newcommand{\normkyr}[1]{\left\| {} #1{}\right \|_{\ell_1,r}}

\newcommand{\normkyrast}[1]{\left\| {} #1 {}\right \|_{\ell_{\infty},r\ast}}

\newcommand{\normA}[2]{\left\| {} #1 {}\right \|_{ #2}}
\newcommand{\normAast}[2]{\left\| {}#1 {}\right \|_{ #2\ast}}
\newcommand{\funcdom}{\Rmn \to \mathbb{R} \cup \lbrace \infty \rbrace}

\newcommand{\sort}{\textnormal{sort}}

\colorlet{FigColor1}{blue}
\colorlet{FigColor2}{red}
\colorlet{FigColor3}{ao(english)}
\colorlet{FigColor4}{orange}
\pgfplotsset{every axis plot/.append style={line width=1.5pt}}

\crefformat{equation}{\textup{#2(#1)#3}}
\crefrangeformat{equation}{\textup{#3(#1)#4--#5(#2)#6}}
\crefmultiformat{equation}{\textup{#2(#1)#3}}{ and \textup{#2(#1)#3}}
{, \textup{#2(#1)#3}}{, and \textup{#2(#1)#3}}
\crefrangemultiformat{equation}{\textup{#3(#1)#4--#5(#2)#6}}%
{ and \textup{#3(#1)#4--#5(#2)#6}}{, \textup{#3(#1)#4--#5(#2)#6}}{, and \textup{#3(#1)#4--#5(#2)#6}}

\Crefformat{equation}{#2Equation~\textup{(#1)}#3}
\Crefrangeformat{equation}{Equations~\textup{#3(#1)#4--#5(#2)#6}}
\Crefmultiformat{equation}{Equations~\textup{#2(#1)#3}}{ and \textup{#2(#1)#3}}
{, \textup{#2(#1)#3}}{, and \textup{#2(#1)#3}}
\Crefrangemultiformat{equation}{Equations~\textup{#3(#1)#4--#5(#2)#6}}%
{ and \textup{#3(#1)#4--#5(#2)#6}}{, \textup{#3(#1)#4--#5(#2)#6}}{, and \textup{#3(#1)#4--#5(#2)#6}}

\crefdefaultlabelformat{#2\textup{#1}#3}

\usepgfplotslibrary{external} 

\tikzexternalize[prefix=tikz,shell escape=-enable-write18]
\newcommand{\TheTitle}{Efficient Proximal Mapping Computation for Unitarily Invariant Low-Rank Inducing Norms}

\title{{\TheTitle \thanks{This work was completed while both authors were members of the LCCC Linnaeus Center and the eLLIIT Excellence Center at Lund University. It was financially supported by the Swedish Foundation for Strategic Research and the Swedish Research Council through the project 621-2012-5357. The first author is now with the Engineering Department at Cambridge University.
}} }
\author{Christian Grussler \thanks{Department of Engineering, Cambridge University, United Kingdom, {\texttt{christian.grussler@eng.cam.ac.uk}}.} \and Pontus Giselsson \thanks{Department of Automatic Control, Lund University, Sweden, \texttt{pontusg@control.lth.se}.}}

\begin{document}
	
	\maketitle

		Low-rank inducing unitarily invariant norms have been introduced to convexify problems with low-rank/sparsity constraint. They are the convex envelope of a unitary invariant norm and the indicator function of an upper bounding rank constraint. The most well-known member of this family is the so-called nuclear norm. 

To solve optimization problems involving such norms with proximal splitting methods, efficient ways of evaluating the proximal mapping of the low-rank inducing norms are needed. This is known for the nuclear norm, but not for most other members of the low-rank inducing family. This work supplies a framework that reduces the proximal mapping evaluation into a nested binary search, in which each iteration requires the solution of a much simpler problem. This simpler problem can often be solved analytically as it is demonstrated for the so-called low-rank inducing Frobenius and spectral norms. Moreover, the framework allows to compute the proximal mapping of compositions of these norms with increasing convex functions and the projections onto their epigraphs. This has the additional advantage that we can also deal with compositions of increasing convex functions and low-rank inducing norms in proximal splitting methods.

		\vspace*{.25 cm}
		\keywords{Low-rank optimization, low-rank inducing norms, k-support norms, regularization, proximal mappings, first order optimization}
		\vspace*{.25 cm}
\section{Introduction}
\subsection*{Background}
Non-convex optimization problems with rank or cardinality constraint appear in many data driven areas such as machine learning, image analysis and multivariate linear regression \cite{izenman1975reduced,antoulas2005approximation,candes2009exact,candes2010matrix,recht2010guaranteed,velu2013multivariate,hastie2015statistical,vidal2016generalized,chandrasekaran2012convex,elden2007matrix} as well as areas within control such as system identification, model reduction, low-order controller design and low-complexity modelling \cite{antoulas1997approximation,fazel2001rank,ankelhed2011design,zoltowski2014sparsity,zare2014low,grussler2016covariance,miller2012identification,hjalmarsson2012ident,liu2013nuclear,liu2010interior,zorzi2015factor,zorzi2016identification,ishteva2013regularized}. Besides the low-rank constraint, these problems are often convex and can be posed as
\begin{equation}
	\begin{aligned}
		& \underset{X}{\textnormal{minimize}}
		& & L(X)\\
		& \textnormal{subject to}
  & & \rk(X) \leq r
	\end{aligned}
	\label{eq:rank_prob_intro}
\end{equation}
where the loss-function $L$ is proper closed and convex. Therefore, one of the most common techniques for solving such problems is to convexify them using regularizers or by taking convex envelopes \cite{fazel2001rank,grussler2016lowrank,grussler2016low,chandrasekaran2012convex}. A promising class of such regularizers and convex envelopes are the so-called unitarily invariant low-rank inducing norms~\cite{grussler2016lowrank}, which are defined for arbitrary unitarily invariant norms $\| \cdot\|_g$ as 
\begin{align}
\|\cdot \|_{g,r\ast} := (\|\cdot\|_g+\chi_{\rk(\cdot)\leq r}(\cdot))^{\ast \ast}
\label{eq:f_reg_intro}
\end{align}
where $\chi_{\rk(\cdot)\leq r}$ is the indicator function for matrices with at most rank $r$ and $(\cdot)^{\ast \ast}$ the biconjugate, which coincides with the convex envelope. If $L = f_0 + f_1(\| \cdot \|_g)$ where $f_0$ and $f_1$ are convex and $f_1$ is increasing, these envelopes have the advantage that a rank-r solution to the convex problem
\begin{equation}
\begin{aligned}
& \underset{X}{\textnormal{minimize}}
& & f_0(X) + f_1(\|X\|_{g,r\ast})
\end{aligned}
\label{eq:full_problem}
\end{equation}
is guaranteed to be a solution to the non-convex problem \cref{eq:rank_prob_intro}. For instance, this may allow us to determine Frobenius norm optimal low-rank approximation with convex constraints by setting $f_1(\|\cdot\|_{\ell_2}) = \|\cdot\|_{\ell_2}^2$,
where $\|\cdot\|_{\ell_2}$ is the Frobenius norm (see~\Cref{sec:example} and \cite{grussler2015optimal} for details). 

\subsection*{Problem}
Although low-rank inducing norms often admit a representation as semi-
definite programs (SDP) (see~\cite{grussler2016lowrank}), proximal splitting algorithms (see~\cite{combettes2011proximal}) are often used for large-scale problems, where standard interior-point method SDP solvers have too costly iterations (see~\cite{peaucelle2002user,toh2004implementation}). 
To apply such methods to \cref{eq:full_problem}, the proximal mapping to $f_1(\|\cdot\|_{g,r\ast})$ is needed, which is the main objective of this work. 

For some $f_1$ the proximal mapping of $f_1(\|\cdot\|_{g,r\ast})$ can be evaluated directly, while for other $f_1$ it may be very involving or even intractable. This can be circumvented by lifting problem  \cref{eq:full_problem} to the epigraph form 
\begin{equation}
\begin{aligned}
& \underset{X,t}{\textnormal{minimize}}
& & f_0(X) + f_1(t) + \chi_{\epi(\|\cdot\|_{g,r\ast})}(X,t),
\end{aligned}
\label{eq:full_problem_epi}
\end{equation}
where $\chi_{\epi(\|\cdot\|_{g,r\ast})}$ is the indicator function of the epigraph to $\|\cdot\|_{g,r\ast}$. Since the proximal mapping of the one dimensional function $f_1$ is fast to evaluate, tractability of the approach relies on projection onto the epigraph being efficient. Thus, we are also interested in the projections onto $\epi(\|\cdot\|_{g,r\ast})$, i.e., the proximal mapping of $\chi_{\epi(\|\cdot\|_{g,r\ast})}$.

\subsection*{Contribution}
In this work, we first introduce a generic search framework for computing a solution to 
\begin{equation}
\min_{Y,w} f(w) + \frac{\gamma}{2} \|Y-Z\|_{\ell_2}^2 + \chi_{\epi(\|\cdot\|_{g^D,r})}(Y,w), \label{eq:dual_prob}
\end{equation}
where $\gamma > 0$ and $Z\in \Rmn$ a fixed, $f$ is proper, closed and convex, and $\normrg{\cdot}$ is the so-called dual norm (see~\Cref{sec:prelim}) to $\normrgast{\cdot}$. Through the Moreau-decomposition, this will allow us to simultaneously treat the proximal mappings of $f_1(\|\cdot\|_{g,r\ast})$, such as $\gamma \|\cdot\|_{g,r\ast}$, $\frac{\gamma}{2}\|\cdot\|_{g,r\ast}^2$, as well as $\chi_{\epi(\|\cdot\|_{g,r\ast})}$. 

The core step of our framework is the reduction of \cref{eq:dual_prob} to a nested binary search algorithm, where each iteration a much simpler problem is needed to be solved. In many cases, this simpler problem can be solved explicitly. This is demonstrated for $\normA{\cdot}{g}$ being the Frobenius norm and the spectral norm. Finally, after computing a singular value decomposition (SVD), our computations only involve singular values and therefore coincide with the computations for vector-valued problems with cardinality constraint.

Note that for $r=1$, all low-rank inducing norms coincide with the well-known nuclear norm (modulus a constant scalar) and efficient algorithms for computing its proximal mapping exist~\cite{parikh2014proximal}. However, for the other members of the low-rank inducing norms such efficient methods are to our best knowledge still unknown. An analytic approach for the low-rank inducing spectral norm has been studied in \cite{wu2014moreau}. Despite the similarity of using the Moreau-decomposition, this approach is of higher computational cost than what is presented here. This is because of our derived binary search rules. Further, \cite{villa2014proximal} proposes a non-analytic approach for an extended class of not necessarily unitarily invariant low-rank inducing norms (see~\cite{laurent2009group}). This approach, however, depends on the complexity and convergence rates of other optimization algorithms. Finally, \cite{eriksson2015k,lai2014efficient,andersson2016convex,larsson2016convex} consider the special case of the squared low-rank inducing Frobenius norm, but the proximal mapping of the non-squared low-rank inducing Frobenius norm as well as the proximal mapping to general $f_1(\|\cdot\|_{g,r\ast})$ are not considered. Interestingly, our framework shows that the computational complexity in case of $\gamma \|\cdot\|_{g,r\ast}$, $\frac{\gamma}{2}\|\cdot\|_{g,r\ast}^2$, and $\chi_{\epi(\|\cdot\|_{g,r\ast})}$ coincide. In particular, since the algorithms in~\cite{eriksson2015k,lai2014efficient} are special cases of our framework, their computational complexity for the squared low-rank inducing Frobenius norm carries directly over to the non-squared case and the epigraph projection. 

\subsection*{Outline}
The paper is organized as follows. We start by introducing some preliminaries on norms and convex optimization. Subsequently, a formal definition of the class of low-rank inducing norms as well as their application to rank constrained optimization problems is outlined. Then we derive our main results, the binary search framework and outline an algorithm for evaluating their epigraph projections. For the low-rank inducing Frobenius and spectral norms, we make these computations explicit and arrive at implementable algorithms for which the computational cost is analyzed. Subsequently, a case study is performed in order to illustrate the performance of our algorithm when solving a problem of form \cref{eq:full_problem} through proximal splitting. Finally, we draw a conclusion and point the reader to our freely available implementations of these algorithms in MATLAB and Python.

\section{Preliminaries}
\label{sec:prelim}
The set of reals is denoted by $\mathbb{R}$, the set of real vectors by $\mathbb{R}^n$,  the set of vectors with nonnegative entries by $\mathbb{R}^n_{\geq 0}$ and the set of real matrices by $\Rmn$. In the remainder of the paper, we assume with out loss of generality that $n \leq m$. The singular valued decomposition of $X \in \Rmn$ is denoted by $X = \sum_i^{n} \sigma_i(X) u_i v_i^\transp$ with non-increasingly ordered singular values $\sigma_1(X) \geq \dots \geq \sigma_{n}(X)$ (counted with multiplicity). The corresponding vector of all singular values is given by
$$\sigma(X):=(\sigma_1(X),\ldots,\sigma_{n}(X)).$$
For all $x=(x_1,\ldots,x_n)\in\mathbb{R}^n$, we define the $\ell_p$ norms by
\begin{align}
\ell_p(x):=\left(\sum_{i=1}^{q} |x_i|^p\right)^{\frac{1}{p}} \quad \text{and} \quad \ell_{\infty}(x):=\max_{i}|x_i|,
\end{align}
where $|\cdot|$ denotes the absolute value. 

A matrix norm $\| \cdot \|: \Rmn\to\mathbb{R}_{\geq 0}$ is called \emph{unitarily invariant} if for all unitary matrices $U \in \mathbb{R}^{n \times n}$ and $V \in \mathbb{R}^{m \times m}$ and all $X \in \Rmn$ it holds that $\|UXV\|= \|X\|$. Equivalently, unitary invariance can be characterized by \emph{symmetric gauge functions} (see~e.g.~\cite[Theorem~7.4.7.2]{horn2012matrix}):
			\begin{defn}
	\label{def:gauge}
	A function $g: \mathbb{R}^n \to \mathbb{R}_{\geq 0}$ is a symmetric gauge function if
\begin{defnenum}
		\item $g$ is a~norm.
		\item $\forall x \in \mathbb{R}^{n}: g(|x|) = g(x)$, where $|x|$ denotes the element-wise absolute value. \label{def:gauge:absolute}
		\item $g(Px) = g(x)$ for all permutation matrices $P\in\mathbb{R}^{n\times n}$ and all $x\in\mathbb{R}^n$.
	\end{defnenum}
\end{defn}

\begin{prop}
	\label{thm:symmgauge}
	The norm $\| \cdot \|:\Rmn\to\mathbb{R}_{\geq 0}$ is unitarily invariant if and only if $$\| \cdot \| = g(\sigma_1(\cdot),\dots,\sigma_{n }(\cdot))$$ where $g$ is a symmetric gauge function.
\end{prop}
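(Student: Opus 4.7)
The plan is to prove the two implications separately, using the singular value decomposition to link matrix norms on $\Rmn$ with norms on $\mathbb{R}^n$. Throughout, write $\diag(x)\in\Rmn$ for the rectangular matrix whose $(i,i)$-entry is $x_i$ for $i=1,\dots,n$ and which is zero elsewhere; by construction $\sigma(\diag(x))$ is the vector $|x|$ reordered in non-increasing fashion.

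For the easier \emph{if} direction, assume $\|X\|=g(\sigma(X))$ for a symmetric gauge function $g$. Unitary invariance is then immediate, since $\sigma(UXV)=\sigma(X)$ for any unitary $U,V$. The only non-trivial step is verifying that $\|\cdot\|$ is itself a norm, and the sole delicate point there is the triangle inequality: this is where I expect the main obstacle, because it does not follow from the symmetric gauge axioms in a purely algebraic manner. The standard route, which I would follow, is to invoke Ky Fan's inequality stating that $\sigma(X+Y)$ is weakly majorised by $\sigma(X)+\sigma(Y)$, together with the fact that symmetric gauge functions are monotone under weak majorisation. Combining the two yields
\begin{equation*}
g(\sigma(X+Y))\le g(\sigma(X)+\sigma(Y))\le g(\sigma(X))+g(\sigma(Y)),
\end{equation*}
where the last inequality uses only subadditivity of $g$. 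Positive homogeneity and definiteness follow directly from the corresponding properties of $g$ and of $\sigma$.

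For the \emph{only if} direction, assume $\|\cdot\|$ is unitarily invariant and define $g:\mathbb{R}^n\to\mathbb{R}_{\ge 0}$ by $g(x):=\|\diag(x)\|$. I would then check the three axioms of \Cref{def:gauge} in turn. Axiom i.\ (norm) is inherited from $\|\cdot\|$ and linearity of $x\mapsto\diag(x)$. Axiom ii.\ follows by writing $\diag(|x|)=D\diag(x)$ with $D=\diag(\sign(x))\in\mathbb{R}^{n\times n}$ a diagonal $\pm1$ unitary; unitary invariance gives $g(|x|)=g(x)$. Axiom iii.\ is obtained by observing that for any permutation matrix $P\in\mathbb{R}^{n\times n}$ one has $\diag(Px)=P\diag(x)\tilde P^\transp$ for a suitable permutation $\tilde P\in\mathbb{R}^{m\times m}$ extending $P$, and both $P,\tilde P$ are unitary.

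It then remains to verify the identity $\|X\|=g(\sigma(X))$. Taking any SVD $X=U\Sigma V^\transp$ with $\Sigma=\diag(\sigma(X))\in\Rmn$, unitary invariance gives $\|X\|=\|\Sigma\|=\|\diag(\sigma(X))\|=g(\sigma(X))$, which completes the argument. The only non-routine ingredient overall is the weak majorisation fact used to establish the triangle inequality in the \emph{if} direction; everything else reduces to a careful bookkeeping of which unitary transformation to apply.
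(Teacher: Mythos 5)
This proposition is a classical result (von Neumann's theorem) that the paper does not prove but only cites, namely as \cite[Theorem~7.4.7.2]{horn2012matrix}. Your sketch correctly reproduces the standard argument found in that reference --- the \emph{only if} direction via $g(x):=\|\diag(x)\|$ and signed/permuted unitaries, and the \emph{if} direction via Ky Fan's weak majorisation inequality combined with the monotonicity of symmetric gauge functions under weak majorisation --- and you have correctly isolated the latter two facts as the only genuinely non-trivial external ingredients, so there is nothing to object to.
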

 Throughout this work, we use the notation $\|X\|_g := g(\sigma(X))$. For $X,Y \in \Rmn$ the \emph{Frobenius inner product} is defined as
 \begin{equation*}
 \langle X , Y \rangle := \sum_{i=1}^{m}\sum_{j=n}^{n} x_{ij} y_{ij} = \trace(X^\transp Y)
 \end{equation*}
 with \emph{Frobenius~norm}
  \begin{equation*}
	\|X\|_{\ell_2} := \ell_2(\sigma(X)) = \sqrt{\langle X , X \rangle }=\sqrt{\sum_{i=1}^{n}\sum_{j=1}^{m} x_{ij}^2}.
\end{equation*} Moreover, the \emph{nuclear norm} and the \emph{spectral norm} are given by
\begin{equation*}
	\|\cdot \|_{\ell_1}:=\ell_{1}(\sigma(\cdot)) \quad \text{and} \quad \|\cdot\|_{\ell_\infty}:=\ell_{\infty}(\sigma(\cdot))=\sigma_1(\cdot).
\end{equation*}
The \emph{dual norm} to $\|\cdot \|_g$ is defined as
	\begin{align}
	\|\cdot \|_{g^D} &:= \max_{\|X\|_g \leq 1} \langle \cdot , X \rangle= g^D(\sigma_1(\cdot ),\dots,\sigma_n(\cdot)).
	\label{eq:dual_uni_norm}
\end{align} 
In particular, this means that dual norms inherit the unitary invariance as well as the duality relationship for $\ell_p$ norms, i.e. 
\begin{equation*}
g = \ell_p \quad \Longrightarrow \quad g^D = \ell_q
\end{equation*} 
with $p,q \in [1,\infty ]$ satisfying $\frac{1}{p} + \frac{1}{q} = 1$ (see~e.g.~\cite{luenberger1968optimization}). For example, the Frobenius norm is self-dual, i.e. $g = g^D = \ell_2$ and the dual norm to the spectral norm is the nuclear norm, i.e. $g = \ell_\infty$ with $g^D = \ell_1$. 

Furthermore, in this work the following truncated dual gauge functions will play a key role. To this end, let us define the truncation operator $T: \mathbb{R}^{n} \to \mathbb{R}^{r-t+1}$  for all $1 \leq r \leq n$ and $(t,s) \in \{1,\dots,r\} \times \{0,\dots,n-r\}$ as
\begin{equation}
(Tx)_i := \begin{cases}
\sort(x)_i, &\text{if } 1\leq i \leq r-t, \\
\dfrac{\sum_{i = r-t+1}^{r+s} \sort(x)_i}{\sqrt{t+s}},	&\text{if } i = r-t+1,
\end{cases}	\label{eq:transform}
\end{equation} 
where $\sort: \mathbb{R}^n \to \mathbb{R}^n$ denotes the sorting in descending order, and the corresponding \emph{truncated gauge function of} of $g^D$ as
\begin{equation*}
g^D_{r,s,t}(x) := g^D\bigg((Tx)_1,\dots,(Tx)_{r-t},\underbrace{(Tx)_{r-t+1},\dots,(Tx)_{r-t+1}}_{t~\textnormal{times}},0,\dots,0\bigg)
\end{equation*}
for all $x \in \mathbb{R}^n$. For the special case $(t,s) = (1,0)$, we simply write $g^D_r$. Note that $g^D_{r,s,t}$ is indeed a gauge function with dual gauge function \cite[Lemma~2.2.2]{hiriart1996convex2})
\begin{equation*}
	g_{r,s,t}(x) := g((Tx)_1,\dots,(Tx)_{r-t},\underbrace{\tfrac{{(Tx)}_{r-t+1}(s+t)}{t},\dots,\tfrac{(Tx)_{r-t+1}(s+t)}{t}}_{t~\textnormal{times}},0,\dots,0).
\end{equation*}

For the convince of the reader, we review next some elementary definitions and results from convex optimization. For $f: \funcdom$ we define the following set notions:
\begin{itemize}
	\item \emph{effective domain}: $\dom (f) :=  \lbrace X \in \Rmn: f(X) < \infty \rbrace.$
	\item \emph{epigraph}: $\epi(f) := \lbrace (X,t) : f(X) \leq t, X \in \dom (f), ~\ t \in \mathbb{R} \rbrace.$
	\item \emph{subdifferential in $X \in \dom (f)$}: $$\partial f(X) := \lbrace G \in \Rmn: \langle G, Y-X \rangle \leq f(Y)-f(X) ~\ \text{for all} ~\ Y \in \dom (f) \rbrace.$$
\end{itemize}
In particular, by \cite[Exampel VI.3.1]{hiriart2013convex}
\begin{align}
\label{eq:sub_norm}
\partial \|X\|_g = \{ G \in \Rmn: \langle G, X\rangle = \|X\|_g, \ \|G\|_{g^D} = 1 \}.
\end{align} 
Further, $f$ is said to be: 
			 \begin{itemize}
			 	\item \emph{proper} if $\dom(f) \neq \emptyset$.
			 	\item \emph{closed} if $\epi(f)$ is a closed set.
			 \end{itemize}
		 The \emph{conjugate (dual) function} $f^\ast$ of $f$ is defined as $$f^\ast(Y) :=   \sup_{X \in \Rmn} \left[ \langle X, Y \rangle - f(X) \right]$$
		 for all $Y \in \Rmn$. The function $f^{\ast \ast} := (f^\ast)^\ast$ is called the \emph{biconjugate function} or \emph{convex envelope} of $f$. 
For $f:\mathbb{R} \to\mathbb{R}\cup\lbrace\infty\rbrace$, we say that $f$ \emph{increasing} if $$x \leq y \ \Rightarrow \ f(x) \leq f(y) \ \text{ for all } \ x,y \in \dom(f)$$ and if there exist $x,y\in\mathbb{R}$ such that $x<y$ and $f(x)<f(y)$. Moreover, its monotone \emph{monotone conjugate} is defined as \cite{rockafellar1970convex} $$f^+(y) := \sup_{x \geq 0} \left[ xy - f(x) \right] \ \text{ for all } y\in\mathbb{R}.$$ 		 
The indicator function of a set $\mathcal{S} \subset \Rmn$ is defined as
\begin{equation*}
	\chi_{\mathcal{S}}(X) := \begin{cases} 0 & \textnormal{if }X \in \mathcal{S}, \\
		\infty & \textnormal{if } X \notin \mathcal{S}. \end{cases}
\end{equation*}
We also use this notation for the indicator function of the set of matrices with at most rank $r$, i.e. $\chi_{\rk(\cdot)\leq r}$. For any $Z \in  \Rmn$, the \emph{proximal mapping} of a closed, proper and convex function $f: \funcdom$ is defined as
\begin{align}
\prox_{\gamma f}(Z) := \argmin_X \left(f(X)+\dfrac{1}{2\gamma} \|X-Z\|^2_{\ell_2} \right). \label{eq:prox_op}
\end{align}
In particular, $\prox_{\gamma \chi_{\mathcal{C}}}(Z)$ coincides with the unique Euclidean projection
\begin{align*}
	\proj_{\mathcal{C}}(Z) := 	\argmin_{X \in \mathcal{C}} \|X-Z\|_{\ell_2}
\end{align*}
onto $\mathcal{C}$ for any closed, non-empty set $\mathcal{C} \subset \Rmn$. Moreover, by the \emph{extended Moreau decomposition} it holds for all $f: \funcdom$, $Z \in \Rmn$ and $\gamma > 0$ that (see~\cite[Theorem~6.29]{bauschke2011convex})
\begin{align}
\prox_{\gamma f}(Z) = Z - \gamma \prox_{\gamma^{-1} f^\ast}(\gamma^{-1}Z). \label{eq:moreau_decomp}
\end{align}


\section{Low-Rank Inducing Norms}
\label{sec:norms}
This section introduces the family of unitarily invariant \emph{low-rank inducing norms}, which has been discussed in~\cite{grussler2016lowrank}. Besides recapping some elementary properties, this section briefly motivates the usefulness of these norms as convex envelopes or additive regularizers in optimization problems to promote low-rank solutions.

Low-rank inducing norms are defined as the dual norm of a \emph{rank constrained dual norm}
\begin{equation}
\| Y \|_{g^D,r} :=  \max_{\stackrel{\rk(X) \leq r}{\|X\|_g \leq 1}} \langle X, Y \rangle. \label{eq:def_rank_ind_norm_dual}
\end{equation}
This means that the \emph{low-rank inducing norms} corresponding to $\|\cdot\|_g$ are given by
\begin{align}
\|X\|_{g,r*} := \max_{\|Y\|_{g^D,r}\leq 1}\langle Y,X\rangle.
\label{eq:def_rank_ind_norm}
\end{align}
For {{$r=n$}}, the rank constraint in \cref{eq:def_rank_ind_norm_dual} is redundant and $\|\cdot\|_g \equiv \| \cdot \|_{g,r*}$. 
Some important properties of these norms are summarized next \cite{grussler2016lowrank}.
	\begin{lem}
		\label{lem:norm}
		Let $X,Y \in \Rmn$, $r \in \mathbb{N}$ be such that $1\leq r \leq n$, and $g: \mathbb{R}^n \to \mathbb{R}_{\geq 0}$ be a symmetric gauge function. Then $\|\cdot\|_{g^D,r}$ is a unitarily invariant norm with
		\begin{equation}
\| Y \|_{g^D,r} = g^D_r(\sigma(Y)).
\label{eq:firstass}
		\end{equation} 
Its dual~norm $\|\cdot\|_{g,r\ast}$ satisfies
		\begin{align}
& \|\cdot\|_{g,r\ast} = (\| \cdot \|_g +  \chi_{\rk(\cdot)\leq r}(\cdot))^{\ast \ast}. \label{eq:convexhull}
		\end{align}
	\end{lem}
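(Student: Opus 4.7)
The plan has two independent parts, matching the two claims of the lemma.

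For the identity $\|Y\|_{g^D,r}=g^D_r(\sigma(Y))$, I would first verify that the supremum in \cref{eq:def_rank_ind_norm_dual} is invariant under the substitution $X\mapsto UXV$ with $U,V$ unitary, because $\|UXV\|_g=\|X\|_g$, $\rk(UXV)=\rk(X)$, and $\langle UXV,UYV\rangle=\langle X,Y\rangle$. This simultaneously shows that $\|\cdot\|_{g^D,r}$ is unitarily invariant and allows me to assume without loss of generality that $Y=\diag(\sigma(Y))$ (extended with zero columns). Then I would apply von~Neumann's trace inequality $\langle X,Y\rangle\leq\sum_i\sigma_i(X)\sigma_i(Y)$, with equality attained by choosing $X$ to share the singular vectors of $Y$ and to have nonincreasing singular values. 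Since the rank constraint truncates $\sigma(X)$ to at most $r$ nonzero entries, the matrix problem reduces to the vector problem
\begin{equation*}
\|Y\|_{g^D,r}=\max_{g(x)\leq 1,\ x_i=0\text{ for }i>r}\sum_{i=1}^r x_i\sigma_i(Y),
\end{equation*}
which by definition of the dual gauge function equals $g^D(\sigma_1(Y),\dots,\sigma_r(Y),0,\dots,0)=g^D_r(\sigma(Y))$ (the $(t,s)=(1,0)$ specialization of \cref{eq:transform} is exactly the truncation to the first $r$ singular values).

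The norm properties of $\|\cdot\|_{g^D,r}$ then follow almost for free: absolute homogeneity and subadditivity are inherited from the supremum representation, and positive definiteness is witnessed by the rank-one test matrix $X=u_1 v_1^{\transp}/g(e_1)$ built from the top singular vectors of $Y$, which gives $\langle X,Y\rangle=\sigma_1(Y)/g(e_1)>0$ whenever $Y\neq 0$.

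For \cref{eq:convexhull}, the strategy is to compute the conjugate of $h:=\|\cdot\|_g+\chi_{\rk(\cdot)\leq r}$ directly and then invoke the Fenchel–Moreau identification. Because $h$ is positively homogeneous of degree one and nonnegative,
\begin{equation*}
h^\ast(Y)=\sup_{\rk(X)\leq r}\bigl[\langle X,Y\rangle-\|X\|_g\bigr]
\end{equation*}
is either $0$ or $+\infty$, and equals $0$ precisely when $\langle X,Y\rangle\leq\|X\|_g$ for every $X$ with $\rk(X)\leq r$. By homogeneity this is equivalent to $\|Y\|_{g^D,r}\leq 1$, so $h^\ast=\chi_{\{\|\cdot\|_{g^D,r}\leq 1\}}$. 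Taking one more conjugate yields the support function of the unit ball of $\|\cdot\|_{g^D,r}$, which by \cref{eq:def_rank_ind_norm} is exactly $\|\cdot\|_{g,r\ast}$. This gives $h^{\ast\ast}=\|\cdot\|_{g,r\ast}$, establishing \cref{eq:convexhull}.

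The only genuine work is the reduction via von~Neumann's inequality; I expect the rest to be bookkeeping. A subtle point worth double-checking is the alignment argument when $Y$ has coincident singular values so that the optimal $X$ on the rank-constrained ball still attains equality in the trace inequality, but this is handled by the usual simultaneous diagonalization construction.
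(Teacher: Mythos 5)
Your argument is correct. Note, however, that the paper does not prove \cref{lem:norm} at all: it is quoted from \cite{grussler2016lowrank}, so there is no in-paper proof to compare against. Your route is the standard one for this result and checks out: unitary invariance of the constraint set plus von Neumann's trace inequality collapses the matrix maximization to the vector problem over the top $r$ singular values, giving $g^D_r(\sigma(Y))$; and for \cref{eq:convexhull}, the conjugate of the positively homogeneous function $\|\cdot\|_g+\chi_{\rk(\cdot)\leq r}$ is the indicator of $\{\|\cdot\|_{g^D,r}\leq 1\}$, whose support function is $\|\cdot\|_{g,r\ast}$ by \cref{eq:def_rank_ind_norm}. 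The only points worth writing out in full are the ones you already flag: that restricting the support of the sorted, nonnegative vector $x$ to the top $r$ coordinates of $\sigma(Y)$ is optimal (rearrangement plus the permutation and sign invariance of the symmetric gauge $g$), and the attainment of equality in the trace inequality via a common singular-vector system.
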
 
In this work, we especially consider the so-called \emph{low-rank inducing Frobenius norm} 
	\begin{align*}
	\|X\|_{\ell_2,r*:}=\max_{\|Y\|_{\ell_2,r} \leq 1} \langle Y,X \rangle
\end{align*}
and the \emph{low-rank inducing spectral norm} 
\begin{align*}
	\normkyrast{X}:=\max_{\normkyr{Y}\leq 1}\langle Y,X\rangle.
\end{align*}
The following motivates the main interest in low-rank inducing norms (see \cite{grussler2016lowrank,grussler2017local,grussler2016low} for details).
	\begin{prop}
	\label{prop:opt_reg}
	Assume that $f_0: \funcdom$ is a proper closed convex function, and that $r \in \mathbb{N}$ is such that $1 \leq r \leq \minmn$. Let $f_1: \mathbb{R}_{\geq 0} \to \mathbb{R} \cup \{ \infty \}$ be an increasing, proper closed convex function, and let $\theta >0$. Then
	\begin{equation}
			f_1(\normrgast{\cdot})^\ast = f_1^+(\normrg{\cdot}) \label{eq:conjugate_norm}
	\end{equation}
	and
	\begin{align}
		\inf_{\stackrel{X \in \Rmn}{\rk(X) \leq r}} \left[ f_0(X) + \theta f_1(\|X\|_g) \right] & \geq -\inf_{D \in \Rmn} \left[f_0^\ast(D) + \theta f^{+}(\theta^{-1}\|D\|_{g^D,r})  \right] \label{eq:fenchel_dual_main}\\
		 &= \inf_{X \in \Rmn} \left[f_0(X) + \theta f_1(\|X\|_{g,r\ast}) \right] \label{eq:fenchel_inequ_main}.
	\end{align}
	If $X^\opts$ solves \cref{eq:fenchel_inequ_main} such that $\rk(X^\opts) \leq r$, then equality holds, and $X^\opts$ is also a solution to the problem on the left of \cref{eq:fenchel_dual_main}.
\end{prop}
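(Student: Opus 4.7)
The plan is to tackle the proposition in three stages: establish the conjugate identity \cref{eq:conjugate_norm}, then derive the Fenchel-Rockafellar equality in \cref{eq:fenchel_inequ_main}, and finally obtain the inequality in \cref{eq:fenchel_dual_main} together with the tightness claim.

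For \cref{eq:conjugate_norm}, I would expand the conjugate by its definition and use positive homogeneity of $\normrgast{\cdot}$. Parametrizing $X = tY$ with $t = \normrgast{X} \geq 0$ and $\normrgast{Y} \leq 1$ factorizes the supremum as
\[
f_1(\normrgast{\cdot})^\ast(D) = \sup_{t \geq 0}\Bigl[t \sup_{\normrgast{Y} \leq 1}\langle D,Y\rangle - f_1(t)\Bigr] = \sup_{t\geq 0}[t\,\normrg{D} - f_1(t)],
\]
where the inner supremum equals $\normrg{D}$ by the dual-norm definition \cref{eq:def_rank_ind_norm}, and the outer one is exactly $f_1^+(\normrg{D})$. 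The restriction to $t\geq 0$ is harmless since $f_1$ is proper on $\mathbb{R}_{\geq 0}$.

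Next, the scaling rule $(\theta h)^\ast(D) = \theta h^\ast(D/\theta)$ applied to \cref{eq:conjugate_norm} yields $(\theta f_1(\normrgast{\cdot}))^\ast(D) = \theta f_1^+(\theta^{-1}\normrg{D})$, so Fenchel-Rockafellar duality applied to $f_0 + \theta f_1(\normrgast{\cdot})$ produces
\[
\inf_X [f_0(X) + \theta f_1(\normrgast{X})] = -\inf_D\bigl[f_0^\ast(D) + \theta f_1^+(\theta^{-1}\normrg{-D})\bigr];
\]
since $\normrg{-D} = \normrg{D}$, this is precisely the equality in \cref{eq:fenchel_inequ_main}, and strong duality holds because $\theta f_1(\normrgast{\cdot})$ is finite-valued and continuous on $\Rmn$. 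For the inequality in \cref{eq:fenchel_dual_main}, I invoke \cref{eq:convexhull} from \cref{lem:norm}: as a convex envelope, $\normrgast{X} \leq \normA{X}{g}$ whenever $\rk(X) \leq r$, and since $f_1$ is increasing, $f_1(\normrgast{X}) \leq f_1(\normA{X}{g})$ on the rank-constrained set, from which the claimed inequality follows directly.

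For the final tightness claim, let $X^\opts$ solve \cref{eq:fenchel_inequ_main} with $\rk(X^\opts)\leq r$. The decisive step is to verify $\normrgast{X^\opts} = \normA{X^\opts}{g}$. By \cref{eq:sub_norm}, I would pick $G\in\partial\normA{X^\opts}{g}$; via von~Neumann's trace inequality and the H\"older-type duality between $g$ and $g^D$, $G$ can be chosen with a common SVD-basis as $X^\opts$, and zeroing out its singular components beyond index $r$ leaves a rank-$r$ matrix $G'$ satisfying $\langle G', X^\opts\rangle = \normA{X^\opts}{g}$ (the removed components multiplied by vanishing singular values of $X^\opts$) and $\normrg{G'} = g^D_r(\sigma(G')) = g^D(\sigma(G')) \leq g^D(\sigma(G)) = 1$. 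Hence $G'$ is feasible in the definition of $\normrgast{X^\opts}$ and attains the value $\normA{X^\opts}{g}$, so combined with the reverse inequality from \cref{eq:convexhull} one obtains $\normrgast{X^\opts} = \normA{X^\opts}{g}$. Substituting this into the chain \cref{eq:fenchel_dual_main}--\cref{eq:fenchel_inequ_main} collapses it to equalities, and $X^\opts$ necessarily solves the left-hand rank-constrained problem. The main obstacle is this last verification, which hinges on carefully aligning the subgradient of $\normA{\cdot}{g}$ with the SVD of $X^\opts$ and observing that the truncated gauge $g^D_r$ reduces to $g^D$ on vectors supported in the top $r$ coordinates.
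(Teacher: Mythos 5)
The paper never proves \cref{prop:opt_reg} --- it imports the statement from the cited references --- so there is no in-paper argument to compare against; judged on its own, your proposal follows the natural route and is essentially correct. The homogeneity computation for \cref{eq:conjugate_norm} is sound (the inner supremum equals $\normrg{D}$ because $\normrgast{\cdot}$ is by construction the dual of $\normA{\cdot}{g^D,r}$ and norm duality is involutive in finite dimensions), the scaling rule plus Fenchel--Rockafellar duality gives \cref{eq:fenchel_inequ_main}, and \cref{eq:fenchel_dual_main} follows from $\normrgast{X}\leq\normA{X}{g}$ on the rank-constrained set via \cref{eq:convexhull}. Two caveats. First, your justification of strong duality (``$\theta f_1(\normrgast{\cdot})$ is finite-valued and continuous'') does not cover the stated hypotheses: $f_1$ is only assumed proper, closed, convex and increasing, so it may take the value $+\infty$ and the composition need not be finite-valued; one must instead invoke a relative-interior qualification such as $\ri(\dom f_0)\cap\ri(\dom f_1(\normrgast{\cdot}))\neq\emptyset$ --- a gap the proposition itself glosses over by deferring to the references. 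Second, your ``decisive step'' $\normrgast{X^\opts}=\normA{X^\opts}{g}$ for $\rk(X^\opts)\leq r$ is correct but can be had more cheaply: the feasible set in \cref{eq:def_rank_ind_norm_dual} is contained in the unit ball of $\normA{\cdot}{g^D}$, so $\normA{Y}{g^D,r}\leq\normA{Y}{g^D}$ for all $Y$, hence by duality $\normrgast{X}\geq\normA{X}{g}$ for every $X$, and combining with the envelope inequality gives equality on rank-$\leq r$ matrices. Your SVD-alignment and truncation argument is a valid longer substitute, but note it tacitly uses the monotonicity of symmetric gauge functions in the absolute values of their arguments to get $g^D(\sigma(G'))\leq g^D(\sigma(G))$; that fact is standard but should be cited or stated explicitly.
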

In other words, \cref{prop:opt_reg} shows that low-rank inducing norms can be used both as additive regularizers and direct convex envelopes to find (approximate) solutions to 
\begin{equation}
\begin{aligned}
& \underset{X}{\textnormal{minimize}}
& & f(X)\\
& \textnormal{subject to}
& & \rk(X) \leq r.
\end{aligned}
\label{eq:rank_prob_norm}
\end{equation}
For regularization, \cite{fazel2001rank,tibshirani1996regression}, we set $f_0 = f$ and choose a suitable $f_1$ and $\theta$ to find an approximate solution. In the second case, when $f$ can be split into $f = f_0 + f_1(\|\cdot\|_g)$ as in \cref{prop:opt_reg}, then
	\begin{align}
	\min_{X \in \Rmn} \left[f_0(X) + f_1(\|X\|_{g,r\ast}) \right] \label{eq:main_convex_prob}
\end{align}
may return an (exact) solution to \cref{eq:rank_prob_norm}.

\section{Proximal Mappings}
\label{sec:main}
For problems of small size, it is often convenient to solve \cref{eq:rank_prob_norm} through semi-definite programming (SDP). However, conventional SDP solvers are typically based on interior-point methods (see~\cite{peaucelle2002user,toh2004implementation}) with iteration cost that grows unfavorably with the problem dimension. For large-scale problems, proximal splitting methods can be used (see~\cite{bauschke2011convex,combettes2011proximal}). 

To efficiently solve \cref{eq:rank_prob_norm}, proximal splitting methods require efficient computation of the proximal mapping of $f_1(\|\cdot\|_{g,r\ast})$. In this section, we present our main results on developing a nested binary search framework (see~\cref{thm:main,alg:proj_epi}) for computing this proximal mapping for simple $f_1$ efficiently. Explicit and implementable steps for these computations will be shown for the simple, but most frequently appearing cases \cite{argyriou2012sparse,McDonaldPS15,grussler2016lowrank,grussler2016low}
\begin{itemize}
	\item $f_1 = (\cdot)$ and $f_1 = (\cdot)^2$
	\item  $g = \ell_2$ and $g = \ell_{\infty}$
\end{itemize}
In \Cref{subsec:compl}, the computational complexity of our generic algorithm as well as these particular cases is derived.

In cases, where $f_1$ is not simple, \cref{eq:rank_prob_norm} can be rewritten as
\begin{equation}
\min_{t \in \mathbb{R},~X \in \Rmn} f_0(X) + f_1(t) + \chi_{\epi(\|\cdot\|_{g,r\ast})}(X,t),
\label{eq:main_full_problem_epi}
\end{equation}
where $\chi_{\epi(\|\cdot\|_{g,r\ast})}$ is the indicator function of the epigraph to $\|\cdot\|_{g,r\ast}$. Then a consensus formulation for proximal splitting methods (see~\cite{combettes2011proximal}) requires an evaluation of the  proximal mappings for $f_1$ and $\chi_{\epi(\|\cdot\|_{g,r\ast})}$. Since $f_1$ is one-dimensional, convex, proper and increasing, its proximal mapping is fast to evaluate. We will see as part of our complexity analysis in \Cref{subsec:compl} that computing $\prox_{\chi_{\epi(\|\cdot\|_{g,r\ast})}}$ has the same cost as the cases $f_1 = (\cdot)$ and $f_1 (\cdot)^2$. 

Finally note that in contrast to $\normA{\cdot}{g,r\ast}$, its dual norm $\normA{\cdot}{g^D,r}$ is explicitly known by its definition \cref{eq:firstass}. Therefore, we derive our search framework for
\begin{align}
\prox_{\gamma^{-1}f_1^+(\|\cdot\|_{g^D,r})}(Z) \quad \text{and} \quad \proj_{-\epi(\|\cdot\|_{g^D,r})}(Z,z_v), \label{eq:proxes}
\end{align}
with
$$-\epi(\normrg{\cdot}) =\{(Y,-w) : \normrg{Y}\leq w\}$$
which by \cref{eq:moreau_decomp} and \cref{eq:conjugate_norm} yields
\begin{subequations}
\begin{align}
\prox_{\gamma f_1(\normrgast{\cdot})}(Z) &= Z - \gamma \prox_{\gamma^{-1} f_1^+(\|\cdot\|_{g^D,r})}(\gamma^{-1}Z) \label{eq:Moreau_explicit}\\
\prox_{\chi_{\epi(\|\cdot\|_{g,r\ast})}}(Z,z_v) &= (Z,z_v) - \proj_{-\epi(\|\cdot\|_{g^D,r})}(Z,z_v)\label{eq:proj_epi}.
\end{align} 
\end{subequations}
\subsection{Search framework}
\label{sec:prox_derv}
Next, we present our main result, which shows that \cref{eq:proxes}, and hence \cref{eq:Moreau_explicit,eq:proj_epi}, can be computed by a nested parameter search. Since the computations of \cref{eq:proxes} can be unified as
	\begin{equation}
	\label{opt:dual_prox}
	\begin{aligned}
	& \underset{Y,w}{\textnormal{minimize}}
	& & f(w) + \frac{\gamma}{2} \|Y-Z\|_{\ell_2}^2\\
	& \textnormal{subject to}
	& & w \geq  \normrg{Y}, \ Y \in \Rmn
	\end{aligned}
	\end{equation}	
	where $f$ is closed, proper and convex, our results are stated for all such problems. In particular, for the cases
	\begin{equation}
	\text{(i)} \ \proj_{\epi(\normrgast{\cdot})}(Z,z_v) \qquad \text{(ii)} \ \prox_{\frac{\gamma}{2}{\normAast{\cdot}{g,r}^2}}(Z) \qquad \text{(iii)} \ \prox_{\gamma {\normAast{\cdot}{g,r}}}(Z) \label{eq:important_cases}
	\end{equation}
	we choose by \cref{eq:moreau_decomp} and \cref{eq:conjugate_norm} 
	\begin{enumerate}[label=(\roman*)]
		\item $f(w) = \frac{1}{2}(w+z_v)^2$ and $\gamma = 1$ \label{case: epi}
		\item $f(w) = \begin{cases} \frac{1}{2}w^2 & w \geq 0\\
		0 & w < 0
		\end{cases}$. \label{case: square}
		\item $f(w) = \chi_{[0,\gamma]}(w)$ \label{case: nonsquare}
	\end{enumerate}
	such that the corresponding solution $(Y^\opts,w^\opts)$ to \cref{opt:dual_prox} yield
		\begin{enumerate}[label=(\roman*)]
		\item $(Y^\opts,-w^\opts) = \proj_{-\epi(\|\cdot\|_{g^D,r})}(Z,z_v)$
		\item $Y^\opts = \prox_{\frac{\gamma}{2}{\normAast{\cdot}{g^D,r}^2}}(Z)$
		\item $Y^\opts = \prox_{\chi_{\|\cdot \|_{g^D} \leq \gamma }(Z)}$ 
	\end{enumerate}
where $\chi_{\|\cdot \|_{g^D} \leq \gamma }$ is the indicator function of the set $\{X: \|X\|_{g^D} \leq \gamma \}$.
\begin{thm}
	\label{thm:main}
	Let $Z = \sum_{i = 1}^n \sigma_i(Z) u_i v_i^\transp \in \Rmn$, $\gamma > 0$, $1\leq r\leq n$, $g: \mathbb{R}^n \to \mathbb{R}$ be a gauge function, and $f:\mathbb{R} \to \mathbb{R} $ be a proper, closed and convex. For each $(t,s) \in \{1, \dots, r\} \times \{0,\dots, n-r\}$ let
	$(y^{(t,s)},w^{(t,s)}) \in \mathbb{R}^{n+1}$ be defined as 
	\begin{subequations}
	\begin{align}
	y^{(t,s)}_i &:= \begin{cases}
	\tilde{y}_i,  &\text{if } 1\leq i \leq r-t,\\
	\dfrac{\tilde{y}_i}{\sqrt{t+s}}, 	&\text{if }  r-t+1 \leq i \leq r+s, \\
	\sigma_i(Z)	&\text{if }  i \geq r+s+1.\\
	\end{cases} \label{eq:tilde_var_y_thm}\\
	w^{(t,s)} &:= \tilde{w} \label{eq:tilde_w_thm}
	\end{align}
	\end{subequations}
	where $(\tilde{y},\tilde{w}) \in \mathbb{R}^{r-t+2}$ fulfills one of the following cases
	\begin{subequations}
		\label{eq:cases_thm}
		\begin{align}
		&\text{Case 1: }\tilde{y} = \tilde{z}, \ \tilde{w}= \argmin_w f, \ \tilde{w} \geq g^D_{r,s,t}(\tilde{z})\label{test:in_epi_thm} \tag{C1}\\
		& \text{Case 2: }  (\tilde{y},\tilde{w}) = 0 ~\ \Longleftrightarrow ~\ g_{r,s,t}(\tilde{z}) \leq  \frac{\mu}{\gamma} \text{ and } \mu \in \partial  f(0) \label{test:zero_sol_thm} \tag{C2}\\
		& \text{Case 3: } \frac{\gamma}{\mu} (\tilde{z}-\tilde{y}) \in \partial {g^D_{r,s,t}}(\tilde{y}),\ \mu \in \partial f(\tilde{w})\cap \mathbb{R}_{\geq 0}, \ \tilde{w} = g^D_{r,s,t}(\tilde{y})\tag{C3} \label{test:boundary_epi_thm} 
		\end{align}
	\end{subequations}
	and $\tilde{z} := T \sigma(Z)$ is given by \cref{eq:transform}. Then $(Y^\opts,w^\opts) = (\sum_{i = 1}^n y_i^{(t^\opts,s^\opts)}u_i v_i^\transp,w^{(t^\opts,s^\opts)})$ is the solution to \cref{opt:dual_prox}, where 
	\begin{subequations}
	\begin{align}
	t^\opts &:= \min \left \lbrace \lbrace t: y^{(t,s_t^\opts)}_{r-t} > y_{r-t+1}^{(t,s_t^\opts)} \rbrace \cup \lbrace r \rbrace \right \rbrace \label{eq: t_opt}\\
	s_t^\opts &:= \min \left \lbrace \lbrace s: y^{(t,s)}_{r+s} > y_{r+s+1}^{(t,s)} \rbrace \cup \lbrace n-r \rbrace \right \rbrace \label{eq:s_t_opt}\\
	s^\opts &:= s_{t^\opts}^\opts \notag
	\end{align} 
	\end{subequations}
	In particular, $(t^\opts,s^\opts)$ can be found by a nested search over $t$ and $s$ with the following rules for increasing/decreasing $t$ and $s$:
		\begin{enumerate}[label=\Roman*.]
		\item $y^{(t,s_t^\opts)}_{r-t} \geq y_{r-t+1}^{(t,s_t^\opts)}$ for all $t \geq t^\opts$.
		\item $y^{(t,s_t^\opts)}_{r-t} \leq y_{r-t+1}^{(t,s_t^\opts)}$ for all $t < t^\opts$. 
		\item If $t < t^\opts$ and $y^{(t,s_t^\opts)}_{r-t} = y_{r-t+1}^{(t,s_t^\opts)}$ then $\left( y^{(t,s_t^\opts)},w^{(t,s_t^\opts)} \right)  = \left( y^{((t,s_t^\opts)},w^{(t,s_t^\opts)} \right) $.
		\item $y^{(t,s)}_{r+s} \geq y_{r+s+1}^{(t,s)}$ for all $s \geq s_t^\opts$.
		\item $y^{(t,s)}_{r+s} \leq y_{r+s+1}^{(t,s)}$ for all $s < s_t^\opts$.
		\item If $s < s_t^\opts$ and $y^{(t,s)}_{r+s} = y_{r+s+1}^{(t,s)}$ then $\left( y^{(t,s)},w^{(t,s)} \right)  = \left( y^{(t,s_t^\opts)},w^{(t,s_t^\opts)} \right)$.
	\end{enumerate}
\end{thm}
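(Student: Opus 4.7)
\emph{Reduction to a singular-value problem.} My first move would be to exploit unitary invariance. Since both the Frobenius norm and $\|\cdot\|_{g^D,r}$ are unitarily invariant, and $\|\cdot\|_{g^D,r}$ depends on $Y$ only through $\sigma(Y)$, a standard von~Neumann / Ky~Fan trace inequality argument shows that the optimal $Y^\opts$ in \cref{opt:dual_prox} admits the singular vectors of $Z$, so it suffices to solve the ordered vector problem
\begin{equation*}
\min_{y \in \mathbb{R}^n_{\geq 0},\, w\in\mathbb{R}} \; f(w) + \frac{\gamma}{2}\sum_{i=1}^n (y_i-\sigma_i(Z))^2 \quad \text{s.t. } g^D(y_1,\dots,y_r,0,\dots,0) \leq w,\; y_1 \geq \dots \geq y_n.
\end{equation*}
The components $y_i$ for $i$ beyond the ``active'' truncation window clearly want to equal $\sigma_i(Z)$ and will, provided the ordering is respected; the issue is precisely which block around index $r$ must be averaged.

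\emph{KKT analysis and the three cases.} Next, I would write the Lagrangian with multiplier $\mu \geq 0$ for the epigraph constraint. Stationarity gives $\mu \in \partial f(w)$ together with $\gamma(z-y) \in \mu\,\partial g^D(y_1,\dots,y_r,0,\dots,0)$ on the active block. Complementary slackness yields the trichotomy in \cref{eq:cases_thm}: Case~\ref{test:in_epi_thm} corresponds to $\mu = 0$ (constraint inactive, so $y=z$ and $w$ minimises $f$); Case~\ref{test:zero_sol_thm} corresponds to $y=0$, which by the sub\-differential characterisation \cref{eq:sub_norm} of a norm is equivalent to $g_{r,s,t}(\tilde z)\leq \mu/\gamma$ for some $\mu\in\partial f(0)$; Case~\ref{test:boundary_epi_thm} handles the generic boundary situation where $w = g^D_{r,s,t}(\tilde y)$ with a nontrivial multiplier.

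\emph{Emergence of the averaging pattern.} The hardest structural step is to show that, at optimality, the active singular values form a single averaged block of length $t+s$. I would argue as follows: fix the sign pattern of $\partial g^D(y_1,\dots,y_r,0,\dots,0)$. Because $g^D$ depends on $y$ only via the \emph{sorted} top-$r$ entries, whenever several $y_i$'s must coincide to preserve the descending ordering, they all receive the same Lagrangian correction from $\mu\,\partial g^D$; stationarity then forces their shifted targets to be the \emph{mean} of $\sigma_{r-t+1}(Z),\dots,\sigma_{r+s}(Z)$, which is exactly the quantity encoded in the truncation operator $T$ of \cref{eq:transform} (the $\sqrt{t+s}$ scaling converting between mean and sum-of-squares penalty). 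The dual gauge function induced on the collapsed block is precisely $g^D_{r,s,t}$ by \cite[Lemma 2.2.2]{hiriart1996convex2}. This is how the transformed variables $(\tilde y,\tilde w)$ enter.

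\emph{Monotonicity for the nested binary search.} The final and most delicate step is proving rules I--VI. My plan is to interpret the inner optimisation (at fixed $(t,s)$) as the proximal problem of a piecewise-smooth convex surrogate and to use its convexity in two ways. For fixed $t$, increasing $s$ brings more of the tail $\sigma_{r+s+1}(Z)$ into the average; I would show that the averaged value $(Tx)_{r-t+1}/\sqrt{t+s}$ crosses the threshold $\sigma_{r+s+1}(Z)$ monotonically, yielding rules IV--V, with rule VI following because on a tie the two candidate solutions agree (merging the boundary index into the average does not change the value of the objective on the feasible set). For the outer search in $t$, I would use that once $s_t^\opts$ has been selected, the averaged block's value is determined by $(t+s)$, and the ``gap'' $y^{(t,s_t^\opts)}_{r-t}-y^{(t,s_t^\opts)}_{r-t+1}$ is monotone in $t$ by an exchange argument comparing the KKT residual at neighbouring $t$; this yields I--III. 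I expect rules I--VI to be the real bulk of work; everything else is essentially a careful accounting of the KKT system and a reduction to a well-structured vector problem.
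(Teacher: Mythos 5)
Your overall architecture matches the paper's proof almost exactly: reduction to an ordered vector problem on the singular values by unitary invariance, identification of an averaged block of length $t+s$ around index $r$ that collapses via the truncation operator $T$ to the reduced problem in $(\tilde y,\tilde w)$, and a normal-cone/KKT trichotomy for that reduced problem yielding \cref{test:in_epi_thm,test:zero_sol_thm,test:boundary_epi_thm} (the paper carries this out in \cref{lem:coordinate_transf,lem:sol}, after passing through the intermediate problems \cref{opt:conj_epi_vec_t,opt:conj_epi_vec_t_s}). Up to and including the three cases, your plan is sound and is the same route.

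The one place where your plan does not yet contain a workable mechanism is the proof of the search rules I--VI, which you rightly flag as the bulk of the work but only gesture at. The assertions that ``the averaged value crosses the threshold monotonically in $s$'' and that the gap $y^{(t,s_t^\opts)}_{r-t}-y^{(t,s_t^\opts)}_{r-t+1}$ is ``monotone in $t$'' are precisely what must be established, and neither follows from convexity of the surrogate alone: the averaged block value is not a fixed average of the $\sigma_i(Z)$ but depends on the multiplier $\mu$, which moves with $(t,s)$ together with the active set. What the paper actually proves (\cref{lem:rules_t,lem:rules_s}) is a sign-propagation property, and the argument is a specific contradiction: assuming the ordering test flips the wrong way between $t'$ and $t'+1$, one forms a convex combination of the two optimal solutions with a tuned coefficient $\alpha$ that forces two entries to coincide, partially re-sorts it (sorting a sub-vector cannot increase the quadratic cost and preserves feasibility by permutation invariance of $g^D_r$), and thereby produces a feasible point for the more constrained problem with no larger cost, contradicting uniqueness from strong convexity. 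You would need to supply this construction or an equivalent; the ``KKT residual at neighbouring $t$'' comparison you propose is not obviously tractable, and rules III and VI (equality of the full solutions on a tie) also require the uniqueness argument rather than an objective-value comparison.
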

A proof of \cref{thm:main} is given in \Cref{sec:proof_main} and a binary search implementation for determining $\proj_{-\epi(\|\cdot\|_{g^D,r})}(Z,z_v)$ is outlined in \Cref{alg:proj_epi}. 

Note that \cref{thm:main} reduces the problem of solving \cref{opt:dual_prox} to the tractability of \cref{test:in_epi_thm,test:zero_sol_thm,test:boundary_epi_thm}.
In the following, \cref{test:in_epi_thm,test:zero_sol_thm,test:boundary_epi_thm} are made explicit for the cases \cref{eq:important_cases} and the low-rank inducing Frobenius and spectral norms, i.e., $g = \ell_2$ and $g = \ell_\infty$. More generally, we will determine \cref{test:in_epi_thm,test:zero_sol_thm,test:boundary_epi_thm}
for $g = \gamma \ell_2$ and $g = \tau \ell_\infty$ for all $\tau > 0$, because this will allow us to handle the first two cases
simultaneously through the identity
\begin{equation*}
\prox_{\frac{\tau^2}{2}{\normAast{\cdot}{g,r}^2}}(Z) 
= Z - \prox_{\frac{1}{2}{\normA{\cdot}{\frac{g^D}{\tau},r}^2}}(Z) = Z- \proj_{Y}\bigg(\proj_{-\epi(\normA{\cdot}{\frac{g^D}{\tau},r})}(Z,0)\bigg)
\end{equation*}
where $\proj_{Y} (Y,w) := Y$ and $\tau > 0$. Further, we will see that it is easy to adjust these computations for the third case, 
because $\prox_{\tau {\normAast{\cdot}{g,r}}}(Z) = \prox_{{\normAast{\cdot}{\tau g,r}}}(Z)$.

\begin{algorithm}[t]
	\caption{Binary search for determining 
		$\proj_{-(\epi(\|\cdot\|_{g^D,r})}(Z,z_v)$}
	\begin{algorithmic}[1]
		\STATE \textbf{Input:}	Let $Z \in \Rmn$, $z_v \in \mathbb{R}$ and $r \in \{1,\dots,n\}$.
		\vspace*{0.1cm}
		\STATE {Let $Z = \sum_{i=1}^n \sigma_i(Z)u_i v_i^\transp$ be an SVD of $Z$ and $z=\sigma(Z)$.}
		\STATEx{ \emph{//Let $f = \frac{1}{2}(w+z_v)^2$ and $\gamma =1$}}
		\STATEx \emph{//Find $(t^\opts,s^\opts)$ in \cref{thm:main} through binary search over $(t,s)$}  
		\STATE{Set $t_{\min} = 1$, $t_{\max} = r$, and $t = \lfloor \frac{t_{\min} + t_{\max}}{2} \rfloor$} 
		\STATEx \emph{//Binary search over $t$ to find $t^\opts$}  
		\WHILE{$t_{\min} \neq t_{\max}$}		
		\STATE{Set $s_{\min} = 0$, $s_{\max} = n-r$, and $s = \lfloor \frac{s_{\min} + s_{\max}}{2} \rfloor$} 
		\STATEx \emph{//Binary search over $s$ to find $s_t^\opts$}
		\WHILE{$s_{\min} \neq s_{\max}$}
		\STATE{Determine $(y^{(t,s)}_{r+s},y^{(t,s)}_{r+s+1})$ in \cref{eq:tilde_var_y_thm,eq:tilde_w_thm}}
		\IF{$y^{(t,s)}_{r+s} < y_{r+s+1}^{(t,s)}$}
		\STATE{$s_{\min} = s+1$}
		\ELSE
		\STATE{$s_{\max} = s$}
		\ENDIF
		\ENDWHILE
		\STATE{Set $s_t^\opts = s_{\min}$}
		\STATE{Determine $(y^{(t,s_t^\opts)}_{r-t},y^{(t,s_t^\opts)}_{r-t+1})$ in \cref{eq:tilde_var_y_thm,eq:tilde_w_thm}}
		\IF{$y^{(t,s)}_{r-t} < y_{r-t+1}^{(t,s)}$}
		\STATE{$t_{\min} = t+1$}
		\ELSE
		\STATE{$t_{\max} = t$}
		\ENDIF
		\ENDWHILE
		\STATE{Set $t^\opts = t_{\min}$ and p.r.n. binary search for $s^\opts = s_{t^\opts}^\opts$}
		\STATE{\textbf{Output: }$(Y^\opts,w^\opts) = (\sum_{i=1}^ny_i^{(t^\opts,s^\opts)}u_iv_i^\transp,-w^{(t^\opts,s^\opts)})$ with $(y^{(t^\opts,s^\opts)},w^{(t^\opts,s^\opts)})$ given by \cref{eq:tilde_var_y_thm,eq:tilde_w_thm}}.
	\end{algorithmic}
	\label{alg:proj_epi}
\end{algorithm}
\subsubsection{Low-rank inducing Frobenius norm}
\begin{prop}
	\label{prop:ell_2_explicit}
	Let $\tau >0$, $g = \tau \ell_2$, $z_v \in \mathbb{R}$, $\gamma = 1$ and $\tilde{z} \in \mathbb{R}_{\geq 0}^{r-t+1}$ with $\tilde{z} = \sort(\tilde{z})$. If $f(w) = \frac{1}{2}(w+z_v)^2$, then 
	\cref{test:in_epi_thm,test:zero_sol_thm} become
	\begin{subequations}
		\begin{align}
		& (\tilde{y},\tilde{w}) = (\tilde{z},zv) ~\ \Longleftrightarrow ~\ \sqrt{\sum_{i=1}^{r-t} \tilde{z}_i^2 + \frac{t}{s+t} \tilde{z}_{r-t+1}^2} \leq -\tau z_v,\label{eq:ellip} \\
		& (\tilde{y},\tilde{w}) = 0 ~\ \Longleftrightarrow ~\ \sqrt{\sum_{i=1}^{r-t} \tilde{y}_i^2 + \frac{s+t}{t} \tilde{y}_{r-t+1}^2} \leq \frac{z_v}{\tau} \label{eq:ellip_polar}
		\end{align} 
		and \cref{test:boundary_epi_thm} derives as
		\begin{align}
		\tilde{y}_i &= \frac{\tilde{z}_i}{1+\frac{\mu}{\tau^2 \tilde{w}}}, \ 1\leq i \leq r-t \label{eq:sol_ell_2_tilde1}\\
		\tilde{y}_{r-t+1} &= \frac{ \tilde{z}_{r-t+1}}{1+\frac{\mu t}{\tau^2 \tilde{w} (s+t)}}\label{eq:sol_ell_2_tilde2}\\
		\tilde{w} &= \mu - z_v \label{eq:sol_ell2_w}  
		\end{align}
		where the unique $\mu \geq 0$ is a solution to the fourth order polynomial \begin{equation}
		\left[\left(\tilde{w}  \tau+\frac{\mu}{\tau}\right)^2-c_1\right]\left[(t + s)\tau \tilde{w}  + \frac{\mu}{\tau} t\right]^2 - t c_2^2\left(\tilde{w}  \tau+\frac{\mu}{\tau}\right)^2 = 0 \label{eq:poly_proj}
		\end{equation}
		$c_1 := \sum_{i=1}^{r-t} \tilde{z}^2_i$ and $c_2 := \sqrt{t+s}\tilde{z}_{r-t+1}$.
		
		Moreover, if $f(w) = \chi_{[0,\gamma]}(w)$, then \cref{test:in_epi_thm,test:zero_sol_thm} reduce to \cref{eq:ellip} with $z_v = -1$, as well as \cref{test:boundary_epi_thm} is determined by \cref{eq:sol_ell_2_tilde1,eq:sol_ell_2_tilde2,eq:poly_proj} with $\tilde{w} = 1$.
	\end{subequations}	
\end{prop}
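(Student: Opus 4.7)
The plan is to substitute explicit closed-form expressions for $g^D_{r,s,t}$ and its predual $g_{r,s,t}$ when $g = \tau\ell_2$ into the three alternatives \cref{test:in_epi_thm}--\cref{test:boundary_epi_thm} of \cref{thm:main}. Because $\ell_2$ is self-dual up to scaling, $g^D = \tau^{-1}\ell_2$. Under the normalization $\tilde{y}_{r-t+1} = (T\tilde{y})_{r-t+1}$ used in \cref{thm:main}, the $t$ repeated coordinates $\tilde{y}_{r-t+1}/\sqrt{s+t}$ contribute $\tfrac{t}{s+t}\tilde{y}_{r-t+1}^2$ to the squared $\ell_2$-energy, so
\begin{equation*}
g^D_{r,s,t}(\tilde{y}) = \tfrac{1}{\tau}\sqrt{\textstyle\sum_{i=1}^{r-t}\tilde{y}_i^2 + \tfrac{t}{s+t}\tilde{y}_{r-t+1}^2}.
\end{equation*}
By duality of weighted $\ell_2$ norms the diagonal weights invert, giving $g_{r,s,t}(\tilde{y}) = \tau\sqrt{\sum_{i=1}^{r-t}\tilde{y}_i^2 + \tfrac{s+t}{t}\tilde{y}_{r-t+1}^2}$.

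For $f(w) = \tfrac12(w+z_v)^2$, one has $\argmin f = -z_v$ and $\partial f(0) = \{z_v\}$. Substituting into \cref{test:in_epi_thm} and \cref{test:zero_sol_thm} reduces them mechanically to \cref{eq:ellip} and \cref{eq:ellip_polar}. The main effort concentrates in Case 3. Since $g^D_{r,s,t}$ is differentiable away from the origin, the inclusion $\tfrac{1}{\mu}(\tilde{z}-\tilde{y}) \in \partial g^D_{r,s,t}(\tilde{y})$ is an ordinary gradient equation. Dividing through by the scalar $\tilde{w} = g^D_{r,s,t}(\tilde{y})$ decouples the equation into two independent linear relations (one for $1 \le i \le r-t$ and one for $i = r-t+1$), which upon inversion yield \cref{eq:sol_ell_2_tilde1} and \cref{eq:sol_ell_2_tilde2}. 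The remaining condition $\mu \in \partial f(\tilde{w}) = \{\tilde{w}+z_v\}$ gives \cref{eq:sol_ell2_w}.

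To obtain the quartic \cref{eq:poly_proj}, I would substitute \cref{eq:sol_ell_2_tilde1} and \cref{eq:sol_ell_2_tilde2} back into the squared constraint $\tau^2\tilde{w}^2 = \sum_{i=1}^{r-t}\tilde{y}_i^2 + \tfrac{t}{s+t}\tilde{y}_{r-t+1}^2$ and clear the denominators $(1+\mu/(\tau^2\tilde{w}))^2$ and $(1+\mu t/(\tau^2\tilde{w}(s+t)))^2$; a short rearrangement produces the claimed form. The indicator case $f(w) = \chi_{[0,\gamma]}(w)$ is reduced to the previous setting via the identity $\prox_{\tau\|\cdot\|_{g,r\ast}} = \prox_{\|\cdot\|_{\tau g,r\ast}}$: after absorbing $\gamma$ into the gauge scale, the only active value on the boundary is $\tilde{w} = 1$ in Case 3, while Case 1 turns into \cref{eq:ellip} with $z_v = -1$ (which is precisely the feasibility statement that $Z$ already lies in the rescaled unit $g^D$-ball).

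The hardest part will be establishing that \cref{eq:poly_proj} has a unique nonnegative root $\mu$. This does not follow from the polynomial alone but from the strong convexity of \cref{opt:dual_prox} in $(Y,w)$, which pins down a unique primal minimizer and hence a unique KKT multiplier $\mu\geq 0$. A self-contained confirmation can be obtained by a direct monotonicity check on the scalar map $\mu \mapsto g^D_{r,s,t}(\tilde{y}(\mu)) - (\mu - z_v)$ over $\mathbb{R}_{\geq 0}$, exploiting that the expressions in \cref{eq:sol_ell_2_tilde1,eq:sol_ell_2_tilde2} shrink componentwise as $\mu$ grows.
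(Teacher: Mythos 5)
Your proposal is correct and follows essentially the same route as the paper: specialize the three cases of \cref{thm:main} by inserting the explicit weighted-$\ell_2$ expressions for $g^D_{r,s,t}$ and $g_{r,s,t}$, turn the Case-3 subdifferential inclusion into a gradient equation solved for $\tilde{y}$ in terms of $\mu$, and substitute back into $\tau^2\tilde{w}^2 = \tilde{g}^D_{r,s,t}(\tilde{y})^2$ to obtain the quartic \cref{eq:poly_proj}, with the indicator case handled by noting $\tilde{w}=1$ is forced on the boundary. Your added remark on why the nonnegative root $\mu$ is unique (via strong convexity of \cref{opt:dual_prox}) is a point the paper merely asserts, but it is not a different method.
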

A proof to \cref{prop:ell_2_explicit} can be found in \Cref{proof:ell_2_explicit}.

\subsubsection{Low-rank inducing spectral norm}
\label{subsec:ell_inf_derv}
\begin{prop}
	\label{prop:ell_1_explicit}
	Let $\tau > 0$, $g = \tau \ell_\infty$, $z_v \in \mathbb{R}$, $\gamma = 1$ and $\tilde{z} \in \mathbb{R}_{\geq 0}^{r-t+1}$ with $\tilde{z} = \sort(\tilde{z})$. Further, let \begin{align*}
	\hat{z} &:=\left(\tilde{z}_1,\ldots,\tilde{z}_j,\frac{t}{\sqrt{(t+s)}}\tilde{z}_{r-t+1},\tilde{z}_{j+1},\ldots,\tilde{z}_{r-t}\right) \in \mathbb{R}^{r-t+1},\\
		\alpha &:= \left(\underbrace{1,\ldots,1}_{\text{length } j},\frac{t^2}{(t+s)},1,\dots,1\right)  \in \mathbb{R}^{r-t+1}.
	\end{align*}
	where $j$ is chosen such that 
	\begin{equation}
	\tilde{z}_j>\tfrac{\sqrt{(t+s)}}{t}\tilde{z}_{r-t+1}\geq \tilde{z}_{j+1} \quad \text{or} \quad \tilde{z}_{r-t}  \geq \tfrac{\sqrt{(t+s)}}{t}\tilde{z}_{r-t+1} . \label{eq:break_point_j}
	\end{equation}
	If $f(w) = \frac{1}{2}(w+z_v)^2$, then 
	\cref{test:in_epi_thm,test:zero_sol_thm} become
	\begin{subequations}
		\begin{align}
		&(\tilde{y},\tilde{w}) = (\tilde{z},zv) ~\ \Longleftrightarrow ~\ \sum_{i=1}^{r-t} |\tilde{z}_i| + \frac{t}{\sqrt{t+s}} |\tilde{z}_{r-t+1}| \leq -\tau z_v \label{eq:half}\\
 &(\tilde{y},\tilde{w}) = 0 ~\ \Longleftrightarrow ~\  \max \left(\tilde{z}_1,\dfrac{\sqrt{t+s}}{t} \tilde{z}_{r-t+1} \right) \leq  \frac{z_v}{\tau} \label{eq:ineq_ell_1_polar}
		\end{align} 
	and \cref{test:boundary_epi_thm} derives as
	\begin{align}
	\tilde{y}_i &= \max\left(\tilde{z}_i - \tfrac{\mu}{\tau},0 \right), \ 1\leq i \leq r-t, \label{eq:l1_tilde1}\\
	\tilde{y}_{r-t+1} &= \max \left(\tilde{z}_{r-t+1} - \tfrac{t \mu}{\sqrt{(t + s)}\tau},0 \right), \label{eq:l1_tilde2}\\
	\tilde{w} &= \mu - z_v \label{eq:l1_w}
	\end{align}
	where $\mu = \hat{\mu}_{k^\opts}$ with $\hat{\mu}_k= \frac{z_v+\sum_{i=1}^{k}\hat{z}_{i}}{1+\sum_{i=1}^k\alpha_i}$ and $k^\opts$ can be identified by a search over $k$ with the following rules for increasing/decreasing $k$:
	\begin{enumerate}[label = \Roman*.]
		\item $k^\opts=\max\{k : \hat{z}_k-\alpha_k\hat{\mu}_k\geq 0\}$
		\item $\hat{z}_k-\alpha_k\hat{\mu}_k\geq 0$ for all $k \leq k^\opts$ 
		\item $\hat{z}_k-\alpha_k\hat{\mu}_k< 0$ for all $k> k^\opts$
	\end{enumerate}
	\end{subequations}
Moreover, if $f(w) = \chi_{[0,\gamma]}(w)$, then \cref{test:in_epi_thm,test:zero_sol_thm} reduce to \cref{eq:half} with $z_v = -1$, as well as \cref{test:boundary_epi_thm} is determined by \cref{eq:l1_tilde1,eq:l1_tilde2}, where $\mu = \hat{\mu}_{k^\opts}$ can be found with the search rules from above and $\hat{\mu}_k = \frac{\sum_{i=1}^{k}\hat{z}_{i}}{\sum_{i=1}^k\alpha_i}$.
\end{prop}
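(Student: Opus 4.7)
The proof specializes the three conditions \cref{test:in_epi_thm,test:zero_sol_thm,test:boundary_epi_thm} of \cref{thm:main} to $g=\tau\ell_\infty$, so that $g^D=\tau^{-1}\ell_1$. Since $\tilde z\in\mathbb{R}^{r-t+1}_{\geq 0}$ is sorted non-increasingly, extending it by zeros to $\mathbb{R}^n$ and applying the truncation operator $T$ preserves the first $r-t$ entries and produces $(T\tilde z)_{r-t+1}=\tilde z_{r-t+1}/\sqrt{t+s}$. Substituting this into the definitions of $g^D_{r,s,t}$ and $g_{r,s,t}$ from the preliminaries yields
\begin{align*}
g^D_{r,s,t}(\tilde z)&=\tau^{-1}\left[\sum_{i=1}^{r-t}\tilde z_i+\tfrac{t}{\sqrt{t+s}}\tilde z_{r-t+1}\right],\\
g_{r,s,t}(\tilde z)&=\tau\max\left(\tilde z_1,\tfrac{\sqrt{t+s}}{t}\tilde z_{r-t+1}\right).
\end{align*}
With $f(w)=\tfrac12(w+z_v)^2$, the identities $\argmin_w f=\{-z_v\}$ and $\partial f(0)=\{z_v\}$ combined with $\gamma=1$ translate \cref{test:in_epi_thm} directly into \cref{eq:half} and \cref{test:zero_sol_thm} into \cref{eq:ineq_ell_1_polar}.

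For \cref{test:boundary_epi_thm}, I would use $\partial f(\tilde w)=\{\tilde w+z_v\}$ to obtain $\mu=\tilde w+z_v$, i.e.\ \cref{eq:l1_w}. The inclusion $\tilde z-\tilde y\in\mu\,\partial g^D_{r,s,t}(\tilde y)$ is the first-order optimality condition for the weighted Lasso
\begin{equation*}
\min_{y\in\mathbb{R}^{r-t+1}}\tfrac12\|y-\tilde z\|^2+\mu\,g^D_{r,s,t}(y),
\end{equation*}
and since on the sorted-nonnegative cone $g^D_{r,s,t}$ is linear with coordinate weights $\tau^{-1}$ on the first $r-t$ entries and $\tau^{-1}t/\sqrt{t+s}$ on the $(r-t+1)$-th, coordinate-wise soft-thresholding immediately produces \cref{eq:l1_tilde1,eq:l1_tilde2}. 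Substituting these expressions into $\tilde w=g^D_{r,s,t}(\tilde y)=\mu-z_v$ then produces a scalar, piecewise-linear equation in $\mu$ whose linear pieces are indexed by the number of active (nonzero after thresholding) coordinates of $\tilde y$.

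The auxiliary sequences $\hat z$ and $\alpha$ are designed to merge the two types of coordinates into a single monotone list: the \emph{effective} value of the merged coordinate is $(t/\sqrt{t+s})\tilde z_{r-t+1}$, inserted at position $j+1$ as prescribed by \cref{eq:break_point_j}, with weight $\alpha_{j+1}=t^2/(t+s)$ reflecting the fact that both its value and its soft-threshold scale by $t/\sqrt{t+s}$. Under the assumption that exactly the first $k$ entries of $\hat z$ are active, the $\mu$-equation becomes affine and solvable as $\hat\mu_k=(z_v+\sum_{i=1}^k\hat z_i)/(1+\sum_{i=1}^k\alpha_i)$. The search rules~I--III then follow from the observation that the residual $\hat z_k-\alpha_k\hat\mu_k$, which decides whether index $k$ remains active under the candidate $\hat\mu_k$, is non-increasing in $k$. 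Establishing this monotonicity---together with the strict monotonicity of the two sides of the $\mu$-equation, which together single out $k^\opts$ as the last index passing the non-negativity test---is the main technical obstacle. Finally, for $f(w)=\chi_{[0,\gamma]}(w)$ the argument is identical with $\tilde w$ clamped to $\gamma$ (absorbing $\gamma$ into $\tau$ via $\prox_{\tau\|\cdot\|_{g,r\ast}}=\prox_{\|\cdot\|_{\tau g,r\ast}}$), so the ``$+1$'' in the denominator and the ``$z_v$'' in the numerator of $\hat\mu_k$ drop, and \cref{test:in_epi_thm,test:zero_sol_thm} collapse to \cref{eq:half} with $z_v=-1$.
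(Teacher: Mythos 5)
Your specialization of \cref{test:in_epi_thm,test:zero_sol_thm,test:boundary_epi_thm} to $g=\tau\ell_\infty$ follows the same route as the paper: compute $g^D_{r,s,t}$ and $g_{r,s,t}$ explicitly, read off \cref{eq:half,eq:ineq_ell_1_polar} from $\argmin_w f=-z_v$ and $\partial f(0)=\{z_v\}$, obtain the soft-thresholding formulas \cref{eq:l1_tilde1,eq:l1_tilde2,eq:l1_w} from the optimality condition, and reduce the determination of $\mu$ to a piecewise-linear scalar equation organized by the break-point-sorted vectors $\hat z$ and $\alpha$. Up to that point the argument matches the paper's.

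The gap is exactly the piece you defer as ``the main technical obstacle'': the validity of the search rules I--III. This is the actual content of the paper's \cref{lem:break_point}, and your proposed route to it --- that the residual $\hat z_k-\alpha_k\hat\mu_k$ is non-increasing in $k$ --- is false in general. Take $\hat z=(10,1,0.9)$, $\alpha=(1,1,1)$, $z_v=0$: then $\hat\mu_1=5$, $\hat\mu_2=11/3$, $\hat\mu_3=11.9/4$, and the residuals are $5,\ -5/3-1,\ 0.9-2.975$, i.e.\ $5,\ \approx-2.67,\ \approx-2.08$, which increases from $k=2$ to $k=3$. Only the sign-threshold property holds (once negative, the residual stays negative), and proving it requires the more delicate argument of \cref{lem:break_point}: introduce the true solutions $\mu_k$ of the truncated equations $\sum_{i\le k}\max(\hat z_i-\alpha_i\mu,0)+z_v-\mu=0$, show $\mu_{k-1}\le\mu_k$, establish the equivalence $\hat z_k-\alpha_k\mu_k\ge 0\Leftrightarrow\hat\mu_k=\mu_k$ (which is where the break-point sorting of $\hat z/\alpha$ enters), and deduce the sign pattern and $k^\opts=\max\{k:\hat z_k-\alpha_k\hat\mu_k\ge 0\}$ from these. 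Without this lemma the binary search over $k$ is unjustified. The remaining steps, including the $f=\chi_{[0,\gamma]}$ case, are consistent with the paper.
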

\cref{prop:ell_1_explicit} is prove in \Cref{proof:ell_1_explicit}.
\subsection{Computational Complexity}
\label{subsec:compl}
In the following, we evaluate the computational complexity, i.e. counting all flops (see~\cite{trefethen1997numerical}) of the discussed approaches for computing
$$\prox_{\chi_{\epi(\|\cdot\|_{g,r\ast})}}(Z,z_v) = (Z,z_v) - \proj_{-\epi(\|\cdot\|_{g^D,r})}(Z,z_v).$$
Since the same analysis also applies to the other cases discussed in \cref{eq:important_cases}, this will allow 
us to compare our approach to existing methods. 
Our evaluation starts with a discussion of \cref{alg:proj_epi} for a general gauge function, followed by an explicit discussion for the cases of $g=\ell_2$ and $g = \ell_{\infty}$ in \Cref{subsec:complex_ell_2,subsec:complex_ell_infty}. Assume that the cost for determining $$(y^{(t,s)}_{r-t},y^{(t,s)}_{r-t+1},y^{(t,s)}_{r+s},y^{(t,s)}_{r+s+1})$$ is bounded by $C(n,r)$. Then the complexity of \cref{alg:proj_epi} is the sum of:
\begin{enumerate}
	\item SVD for $Z$ providing all $\sigma_i(Z)$ and $u_iv_i^\transp$ such that $Z = \sum_{i = 1}^n \sigma_i(Z)u_iv_i^\transp$ (see~\cite{trefethen1997numerical}): $\mathcal{O}(n^3)$
	\item Binary search rules in~\cref{lem:rules_t,lem:rules_s} for $t$ and $s$ (see~\cite{knuth1998art}): $$\mathcal{O}(C(n,r) \log(r)\log(n-r))$$ 
\end{enumerate}
By the coordinate transformation \cref{eq:tilde_var_y_thm}, it holds that \begin{align*}
(y^{(t,s)}_{r-t},y^{(t,s)}_{r-t+1},y^{(t,s)}_{r+s},y^{(t,s)}_{r+s+1})= (\tilde{y}_{r-t},\frac{1}{\sqrt{s+t}}\tilde{y}_{r-t+1},\frac{1}{\sqrt{s+t}}\tilde{y}_{r-t+1},\sigma_{r+s+1}(Z))
\end{align*}
and therefore the cost for $C(n,r)$ is determined by the cost $\tilde{C}(n,r)$ for finding $(\tilde{y}_{r-t},\tilde{y}_{r-t+1})$ in \cref{test:in_epi_thm,test:zero_sol_thm,test:boundary_epi_thm}.
In addition, we are  required to compute the full solution $y^{(t^\opts,s^\opts)}$ once an optimal pair $(t^\opts,s^\opts)$ is found. The cost for these pre- and post-computing steps is at most $\mathcal{O}(n)$ and therefore the overall computational complexity of \cref{alg:proj_epi} is bounded by
\begin{equation}
\underbrace{\mathcal{O}(n^3)}_{\textnormal{SVD}} ~+~ \underbrace{\mathcal{O}(\tilde{C}(n,r) \log(r)\log(n-r+1))}_{\textnormal{bineary search for } (t^\opts,s^\opts)} ~+~ \underbrace{\mathcal{O}(n)}_{\textnormal{full solution}~\cref{eq:tilde_var_y_thm}}. \label{eq:complexity_alg1}
\end{equation} 
\begin{rem}
	\label{rem:sum}
	The cost for computing $\tilde{z}_{r-t+1}$ is given by the cost for knowing $\sum_{i=r+1}^{r+s} \sigma_i(Z)$ (for $s > 0$) and $\sum_{i=r-t+1}^{r} \sigma_i(Z)$. Both sums could be computed a priori for all $t$ and $s$ through incremental summation with cost $\mathcal{O}(n)$. However, in practice it may be cheaper to store and re-use the intermediate sums, when deriving $\sum_{i=r-t+1}^{r} z_i$ and $\sum_{i=r+1}^{r+s} z_i$. This means we only need to compute additional intermediate sums whenever $t$ and $s$ get increased within in the binary search. 
\end{rem}
Finally, computing $\prox_{\chi_{\epi(\|\cdot\|_{g,r\ast})}}(Z,z_v)$ from $\proj_{-\epi(\|\cdot\|_{g^D,r})}(Z,z_v)$ only contributes an additional $n+1$ subtractions, leaving this complexity analysis invariant. 
 
\subsubsection{Low-rank inducing Frobenius norms}
\label{subsec:complex_ell_2}
In order to determine the computational cost $\tilde{C}(n,r)$ for $g = \ell_2$, we need to distinguish between
 \cref{eq:ellip,eq:ellip_polar} and the case when $\mu$ has to be determined. Both cases require $\sum_{i=1}^{r-t} \tilde{z}_i^2 = \sum_{i=1}^{r-t} \sigma_i^2(Z)$ as either part of the inequalities or as coefficients in the polynomial \cref{eq:poly_proj}. These sums can be computed once for all $t \in \{1,\dots,r\}$ with cost $\mathcal{O}(r)$. Then testing \cref{eq:ellip,eq:ellip_polar} as well as solving the fourth order polynomial \cref{eq:poly_proj} are of cost $\mathcal{O}(1)$. Thus, the complexity of $\tilde{C}(n,r)$ can be summarized as $\mathcal{O}(1)$. 

Using \cref{eq:complexity_alg1}, the complexity for computing $\prox_{\chi_{\epi(\|\cdot\|_{\ell_2,r\ast})}}(Z,z_v)$ is
\begin{equation}
\underbrace{\mathcal{O}(n^3)}_{\textnormal{SVD}} ~+~ \underbrace{\mathcal{O}(\log(r)\log(n-r+1))}_{\textnormal{bineary search for } (t^\opts,s^\opts)}  ~+~ 
\underbrace{\mathcal{O}(n)}_{\textnormal{full solution}~\cref{eq:tilde_var_y_thm}}~ + ~ \underbrace{O(r)}_{\forall t: \ \sum_{i=1}^{r-t} \tilde{z}_i^2} \label{eq:complexity_ell_2}
\end{equation} 
This is same cost as derived in~\cite{eriksson2015k,lai2014efficient} for determining $\prox_{\frac{\gamma}{2}\|\cdot\|_{\ell_2,r\ast}^2}(Z)$.

\subsubsection{Low-rank inducing spectral norms}
\label{subsec:complex_ell_infty}
As in the previous case, in order to compute $\tilde{C}(n,r)$ for $g =  \ell_\infty$, we distinguish between \cref{eq:half,eq:ineq_ell_1_polar} and the case when $\mu$ has to be determined. \cref{eq:half,eq:ineq_ell_1_polar} require $\sum_{i=1}^{r-t}\tilde{z}_i = \sum_{i=1}^{r-t}\sigma_i(Z)$. This can be done once for all $t \in \{1,\dots,r\}$ with cost $\mathcal{O}(r)$. Verifying the corresponding inequalities is then of complexity $\mathcal{O}(1)$. 

For determining $\mu$ we need to 
\begin{enumerate}[label = \alph*)]
	\item Find $j$ in \cref{eq:break_point_j}: $\mathcal{O}(\log(r-t+1))$, because $\tilde{z}_1 \geq \dots \geq \tilde{z}_{r-t}$.
	\item Determine $\mu_{k^{\opts}} = \mu$ through binary search: $\mathcal{O}(r-t+1)$, because $\sum_{i=1}^{r-t+1} \hat{z}_i$ may need to be computed. 
\end{enumerate}
Thus, $\tilde{C}(n,r)$ is given by the complexity of determining $\mu$, which by the preceding analysis is of at most $\mathcal{O}(r)$ and therefore the complexity for computing $\prox_{\chi_{\epi(\|\cdot\|_{\ell_\infty,r\ast})}}(Z,z_v)$ is
\begin{equation}
\underbrace{\mathcal{O}(n^3)}_{\textnormal{SVD}} ~+~ \underbrace{\mathcal{O}(r\log(r)\log(n-r+1))}_{\textnormal{bineary search for } (t^\opts,s^\opts)} ~+~ 
\underbrace{\mathcal{O}(n)}_{\textnormal{full solution}~\cref{eq:tilde_var_y_thm}} ~ + ~ \underbrace{O(r)}_{\forall t: \ \sum_{i=1}^{r-t}\tilde{z}_i} \label{eq:complexity_ell_inf}
\end{equation}
Compared to \cite{wu2014moreau}, our approaches reduces the cost for finding $(t^\opts,s^\opts)$, significantly, from $\mathcal{O}(r(n-r+1) )$ to $\mathcal{O}(r\log(r)\log(n-r+1)+n)$. This is especially important for the corresponding vector-valued problem, when rank is replaced by cardinality.

\section{Case Study: Matrix Completion}
\label{sec:example}
In the following, we will see how the binary search parameters $(t,s,k)$ from \cref{alg:proj_epi} and \cref{prop:ell_1_explicit} evolve when solving an optimization problem with proximal splitting. We consider the convexified low-rank matrix completion problem (see, e.g., \cite{candes2009exact,chandrasekaran2012convex,grussler2016lowrank} for motivation and examples)
\begin{equation}
\label{prob:comp}
\begin{aligned}
& \underset{M}{\textnormal{minimize}}
& &  \|M\|_{\ell_\infty,r\ast} \\
&  \textnormal{subject to} & & m_{ij} = n_{ij}, \ (i,j) \in \mathcal{I}
\end{aligned}
\end{equation} 
with  $r =50$, $\mathcal{I} := \{n_{ij}: n_{ij} > 0 \}$ and $N = \sum_{i=1}^r u_i u_i^\transp $ being defined through the SVD of
\begin{equation}
\label{eq:Hankel}
H :=\begin{tikzpicture}[baseline=(current bounding box.center)]
\matrix (m) [matrix of math nodes,
nodes in empty cells,
right delimiter={)},
left delimiter={(}]{
	1  	& 	1 	& 	  	&   	& 	1 	& 	1  \\
	1  	& 		& 		& 		&  	    & 	0	\\
	&		&		&		&		&		\\
	&		&		&		&		& 		\\
	1	&		&		&		&		& 	0  	\\
	1   &  	0   &		&		&	0	&	0	\\
} ;
\newdimen\L
\L = .8 pt
\draw[loosely dotted, line width = \L] (m-1-2)-- (m-1-5);
\draw[loosely dotted, line width = \L] (m-6-2)-- (m-6-5);
%
\draw[loosely dotted, line width = \L] (m-2-1)-- (m-5-1);
\draw[loosely dotted, line width = \L] (m-2-6)-- (m-5-6);
%
\draw[loosely dotted, line width = \L] (m-5-1)-- (m-1-5);
\draw[loosely dotted, line width = \L] (m-6-1)-- (m-1-6);
\draw[loosely dotted, line width = \L] (m-6-2)-- (m-2-6);


\end{tikzpicture} = \sum_{i=1}^{500} \sigma_i(H)u_i u_i^\transp \in \mathbb{R}^{500 \times 500}.
\end{equation} 
Note that a smaller version of this example has been solved successfully in \cite{grussler2016lowrank} by using SDP-solvers, but this larger example is far out of the scope  of typical SDP-solvers \cite{peaucelle2002user,toh2004implementation}. Therefore, we are going to apply the following Douglas-Rachford splitting scheme (see~\cite{douglas1956numerical,combettes2011proximal,lions1979splitting}):
\begin{equation}
\label{eq:DR}
\begin{aligned}
& X_{i} &=&  &&\prox_{{\normAast{\cdot}{\ell_{\infty},r}}}(Z_{i-1}) \\
&  Y_{i}&= & &&\proj_{\mathcal{L}}(2X_{i} - Z_{i-1})\\
& Z_{i} &= &  &&Z_{i} + Y_{i}-X_{i}
\end{aligned}
\end{equation} 
with $\mathcal{L} := \{X \in \mathbb{R}^{500\times 500}: x_{ij} = n_{ij}, \ (i,j) \in \mathcal{I}  \}$, $Z_0 = 0$ and $\lim_{i \to \infty} X_i = \lim_{i \to \infty} Y_i$ being a solution to \cref{prob:comp}. By the construction of $N$, it can be shown that $\lim_{i \to \infty} X_i = N$ (see~\cite{grussler2016lowrank}). 
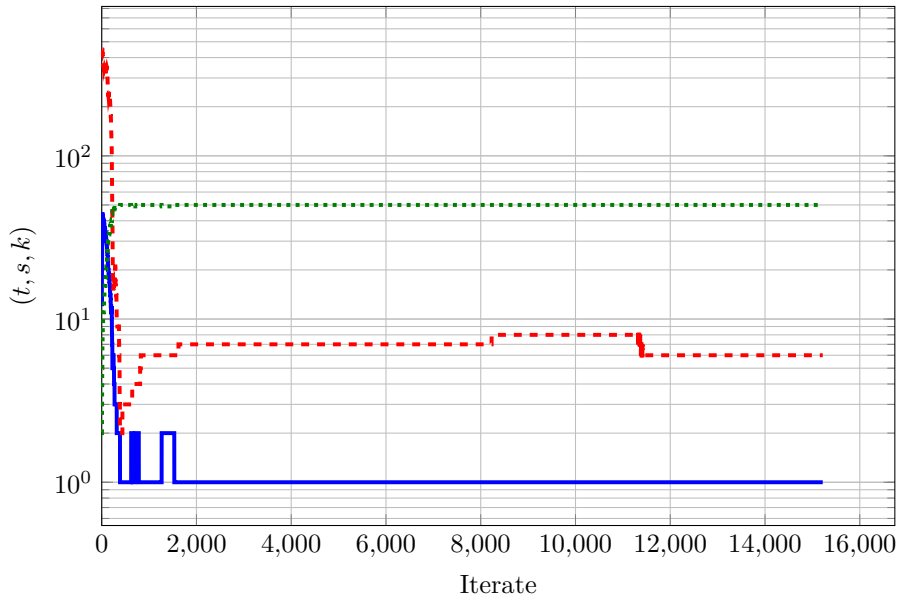
\begin{figure}[tb]
	\centering
\begin{tikzpicture}
\begin{axis}[xmin = 1,
ymode = log,
xlabel= Iterate,
ylabel = \textnormal{$(t,s,k)$},
grid = both,
width = \linewidth,
height = 0.7 \linewidth,
tick label style={/pgf/number format/fixed},
scaled ticks=false
]
\addplot[color = FigColor1] file{ex_path_t.txt};
\label{line:path_t}
\addplot[color = FigColor2 , dashed] file{ex_path_s.txt};
\label{line:path_s}
\addplot[color = FigColor3 ,dotted] file{ex_path_k.txt};
\label{line:path_k}
\end{axis}
\end{tikzpicture}
	\caption{Parameter path of $(\ref{line:path_t}t, \ref{line:path_s}s, \ref{line:path_k} k)$ from \cref{alg:proj_epi} and \cref{prop:ell_1_explicit} when computing $\prox_{{\normAast{\cdot}{\ell_{\infty},r}}}$ within the Douglas-Rachford iterations \cref{eq:DR}. There are no values for the first iterate, because $\prox_{{\normAast{\cdot}{\ell_{\infty},r}}}(Z_{0}) = 0$ and the iterations are stopped when $\|X_i-Y_i\|_F \leq 10^{-8}$. The local plateauing after relatively few iterations suggests to use $(t,s,k)$ from the previous iterations as an initial guess for the current iteration to save computational time.  \label{fig: ex_path}} 
\end{figure}
The parameter path of $(t,s,k)$ for computing $X_i$ is shown in \Cref{fig: ex_path}. We observe that as $X_i$ approaches $N$, the values of $t$, $s$ and $k$ start plateauing. Thus by using the values from one iterate in the subsequent iterate, the practical computational cost may reduce significantly. Finally, after the initial transient, the variance of each parameter is small compared to the overall 500 singular values. As a result, it may be worth considering sparse SVD algorithms, which only computes a small predefined number of largest singular values (see~e.g. \cite{liu2013limited}).

\section{Conclusion}
This work presents a binary search framework for computing the proximal mappings of all unitarily invariant low-rank inducing norms and their epigraph projections. In particular, complete algorithms for the low-rank inducing Frobenius and spectral norms are presented. Our framework unifies and extends the known proximal mapping computations in the following sense: (i) So far, only proximal mappings for the squared low-rank inducing Frobenius norm \cite{eriksson2015k} and the (non-squared) low-rank inducing spectral norm \cite{wu2014moreau} have been derived. This framework is independent of the particular unitary invariant norm and its composition with a convex increasing function. (ii) Excluding the cost for an SVD, we recover the same complexity $\mathcal{O}(n \log n)$ for the squared low-rank inducing Frobenius norm as in~\cite{eriksson2015k,lai2014efficient}, but decrease the complexity for the (non-squared) low-rank inducing spectral norm from $\mathcal{O}(r(n-r+1) )$ in \cite{wu2014moreau} to $\mathcal{O}(r \log(r) \log(n-r+1)+n)$. 

Finally, in our case study we have seen that within a proximal splitting method, this cost may be reduced to $\mathcal{O}(n)$ after a small number of iterations and is therefore roughly the same as in case of the nuclear norm. Implementations for the low-rank inducing Frobenius and spectral norms are available for MATLAB and Python at \cite{grussler2018github,grussler2018githubpy}.

\bibliographystyle{abbrv}
\bibliography{refopt,refpos}

\begin{thebibliography}{10}

\bibitem{andersson2016convex}
F.~Andersson, M.~Carlsson, and C.~Olsson.
\newblock Convex envelopes for fixed rank approximation.
\newblock {\em Optimization Letters}, 11(8):1783--1795, Dec 2017.

\bibitem{ankelhed2011design}
D.~Ankelhed.
\newblock {\em On design of low order {H}-infinity controllers}.
\newblock PhD thesis, Link{\"o}ping University, 2011.

\bibitem{antoulas2005approximation}
A.~Antoulas.
\newblock {\em Approximation of Large-Scale Dynamical Systems}.
\newblock Society for Industrial and Applied Mathematics, Philadelphia, PA,
  2005.

\bibitem{antoulas1997approximation}
A.~C. Antoulas.
\newblock On the approximation of {H}ankel matrices.
\newblock In U.~Helmke, editor, {\em Operators, Systems and Linear Algebra:
  Three Decades of Algebraic Systems Theory}, pages 17--22. Vieweg+Teubner
  Verlag, Wiesbaden, 2013.

\bibitem{argyriou2012sparse}
A.~Argyriou, R.~Foygel, and N.~Srebro.
\newblock Sparse prediction with the k-support norm.
\newblock In F.~Pereira, C.~J.~C. Burges, L.~Bottou, and K.~Q. Weinberger,
  editors, {\em Advances in Neural Information Processing Systems 25}, pages
  1457--1465. Curran Associates, Inc., 2012.

\bibitem{bauschke2011convex}
H.~H. Bauschke and P.~L. Combettes.
\newblock {\em Convex Analysis and Monotone Operator Theory in Hilbert Spaces}.
\newblock CMS Books in Mathematics. Springer New York, 2011.

\bibitem{candes2010matrix}
E.~J. Cand{\`e}s and Y.~Plan.
\newblock Matrix completion with noise.
\newblock {\em Proceedings of the {IEEE}}, 98(6):925--936, 2010.

\bibitem{candes2009exact}
E.~J. Cand{\`e}s and B.~Recht.
\newblock Exact matrix completion via convex optimization.
\newblock {\em Foundations of Computational Mathematics}, 9(6):717, 2009.

\bibitem{chandrasekaran2012convex}
V.~Chandrasekaran, B.~Recht, P.~A. Parrilo, and A.~S. Willsky.
\newblock The convex geometry of linear inverse problems.
\newblock {\em Foundations of Computational Mathematics}, 12(6):805--849, 2012.

\bibitem{combettes2011proximal}
P.~L. Combettes and J.-C. Pesquet.
\newblock {\em Proximal Splitting Methods in Signal Processing}, pages
  185--212.
\newblock Springer New York, 2011.

\bibitem{condat2016fast}
L.~Condat.
\newblock Fast projection onto the simplex and the {$\ell_1$} ball.
\newblock {\em Mathematical Programming}, 158(1):575--585, 2016.

\bibitem{douglas1956numerical}
J.~Douglas and H.~H. Rachford.
\newblock On the numerical solution of heat conduction problems in two and
  three space variables.
\newblock {\em Transactions of the American Mathematical Society},
  82(2):421--439, 1956.

\bibitem{duchi2008efficient}
J.~Duchi, S.~Shalev-Shwartz, Y.~Singer, and T.~Chandra.
\newblock Efficient projections onto the {L1}-ball for learning in high
  dimensions.
\newblock In {\em Proceedings of the 25th International Conference on Machine
  Learning}, 25th International Conference on Machine Learning (ICML), pages
  272--279, 2008.

\bibitem{elden2007matrix}
L.~Eld{\'e}n.
\newblock {\em Matrix methods in data mining and pattern recognition}.
\newblock {SIAM}, 2007.

\bibitem{eriksson2015k}
A.~Eriksson, T.~T. Pham, T.-J. Chin, and I.~Reid.
\newblock The k-support norm and convex envelopes of cardinality and rank.
\newblock In {\em 2015 {IEEE} {C}onference on {C}omputer {V}ision and {P}attern
  {R}ecognition {(CVPR)}}, pages 3349--3357, 2015.

\bibitem{fazel2001rank}
M.~Fazel, H.~Hindi, and S.~P. Boyd.
\newblock A rank minimization heuristic with application to minimum order
  system approximation.
\newblock In {\em Proceedings of the 2001 American Control Conference.},
  volume~6, pages 4734--4739, 2001.

\bibitem{grussler2018github}
C.~Grussler.
\newblock {LRIN}orm: Low-rank optimization by low-rank inducing norms as well
  as non-convex {D}ouglas-{R}achford in {MATLAB}.
\newblock \url{https://github.com/LowRankOpt/LRINorm}, 2018.

\bibitem{grussler2018githubpy}
C.~Grussler.
\newblock {LRIP}y: Low-rank optimization by low-rank inducing norms as well as
  non-convex {D}ouglas-{R}achford in {P}ython.
\newblock \url{https://github.com/LowRankOpt/LRIPy}, 2018.

\bibitem{grussler2016lowrank}
C.~Grussler and P.~Giselsson.
\newblock Low-rank inducing norms with optimality interpretations.
\newblock arXiv:1612.03186v2, 2016.

\bibitem{grussler2017local}
C.~Grussler and P.~Giselsson.
\newblock Local convergence of proximal splitting methods for rank constrained
  problems.
\newblock In {\em 2017 {IEEE} 56th Annual Conference on Decision and Control
  (CDC)}, pages 702--708, Melbourne, VIC, 2017.

\bibitem{grussler2015optimal}
C.~Grussler and A.~Rantzer.
\newblock On optimal low-rank approximation of non-negative matrices.
\newblock In {\em 54th IEEE Conference on Decision and Control (CDC)}, pages
  5278--5283, Osaka, 2015.

\bibitem{grussler2016low}
C.~Grussler, A.~Rantzer, and P.~Giselsson.
\newblock Low-rank optimization with convex constraints.
\newblock {\em IEEE Transactions on Automatic Control}, pages 1--1, 2018.

\bibitem{grussler2016covariance}
C.~Grussler, A.~Zare, M.~R. Jovanovic, and A.~Rantzer.
\newblock The use of the $r\ast$ heuristic in covariance completion problems.
\newblock In {\em 55th IEEE Conference on Decision and Control (CDC)}, Las
  Vegas, NV, 2016.

\bibitem{hastie2015statistical}
T.~Hastie, R.~Tibshirani, and M.~Wainwright.
\newblock {\em Statistical {L}earning with {S}parsity: {T}he {L}asso and
  {G}eneralizations}.
\newblock CRC Press, 2015.

\bibitem{held1974validation}
M.~Held, P.~Wolfe, and H.~P. Crowder.
\newblock Validation of subgradient optimization.
\newblock {\em Mathematical Programming}, 6(1):62--88, 1974.

\bibitem{hiriart1996convex2}
J.-B. Hiriart-Urruty and C.~Lemar{\'e}chal.
\newblock {\em Convex Analysis and Minimization Algorithms II: Advanced Theory
  and Bundle Methods}.
\newblock Grundlehren der mathematischen Wissenschaften. Springer Berlin
  Heidelberg, 1993.

\bibitem{hiriart2013convex}
J.-B. Hiriart-Urruty and C.~Lemar{\'e}chal.
\newblock {\em Convex Analysis and Minimization Algorithms I: Fundamentals}.
\newblock Grundlehren der mathematischen Wissenschaften. Springer Berlin
  Heidelberg, 1996.

\bibitem{hjalmarsson2012ident}
H.~Hjalmarsson, J.~S. Welsh, and C.~R. Rojas.
\newblock Identification of {B}ox-{J}enkins models using structured {ARX}
  models and nuclear norm relaxation.
\newblock {\em IFAC Proceedings Volumes}, 45(16):322 -- 327, 2012.
\newblock 16th IFAC Symposium on System Identification.

\bibitem{horn2012matrix}
R.~A. Horn and C.~R. Johnson.
\newblock {\em Matrix Analysis}.
\newblock Cambridge University Press, 2 edition, 2012.

\bibitem{ishteva2013regularized}
M.~Ishteva, K.~Usevich, and I.~Markovsky.
\newblock Factorization approach to structured low-rank approximation with
  applications.
\newblock {\em SIAM Journal on Matrix Analysis and Applications},
  35(3):1180--1204, 2014.

\bibitem{izenman1975reduced}
A.~J. Izenman.
\newblock Reduced-rank regression for the multivariate linear model.
\newblock {\em Journal of Multivariate Analysis}, 5(2):248 -- 264, 1975.

\bibitem{laurent2009group}
L.~Jacob, G.~Obozinski, and J.-P. Vert.
\newblock Group lasso with overlaps and graph lasso.
\newblock In L.~Bottou and M.~Littman, editors, {\em Proceedings of the 26th
  International Conference on Machine Learning}, pages 433--440, Montreal, June
  2009. Omnipress.

\bibitem{knuth1998art}
D.~E. Knuth.
\newblock {\em The {A}rt of {C}omputer {P}rogramming: {S}orting and
  {S}earching}, volume~3.
\newblock Pearson Education, 1998.

\bibitem{lai2014efficient}
H.~Lai, Y.~Pan, C.~Lu, Y.~Tang, and S.~Yan.
\newblock {\em Efficient k-Support Matrix Pursuit}, pages 617--631.
\newblock Springer International Publishing, 2014.

\bibitem{larsson2016convex}
V.~Larsson and C.~Olsson.
\newblock Convex low rank approximation.
\newblock {\em International Journal of Computer Vision}, 120(2):194--214,
  2016.

\bibitem{lewis1995convex}
A.~S. Lewis.
\newblock The convex analysis of unitarily invariant matrix functions.
\newblock {\em Journal of Convex Analysis}, 2(1):173--183, 1995.

\bibitem{lions1979splitting}
P.-L. Lions and B.~Mercier.
\newblock Splitting algorithms for the sum of two nonlinear operators.
\newblock {\em SIAM Journal on Numerical Analysis}, 16(6):964--979, 1979.

\bibitem{liu2013limited}
X.~Liu, Z.~Wen, and Y.~Zhang.
\newblock Limited memory block {K}rylov subspace optimization for computing
  dominant singular value decompositions.
\newblock {\em SIAM Journal on Scientific Computing}, 35(3):A1641--A1668, 2013.

\bibitem{liu2013nuclear}
Z.~Liu, A.~Hansson, and L.~Vandenberghe.
\newblock Nuclear norm system identification with missing inputs and outputs.
\newblock {\em Systems \& Control Letters}, 62(8):605 -- 612, 2013.

\bibitem{liu2010interior}
Z.~Liu and L.~Vandenberghe.
\newblock {I}nterior-point method for nuclear norm approximation with
  application to system identification.
\newblock {\em SIAM Journal on Matrix Analysis and Applications},
  31(3):1235--1256, 2010.

\bibitem{luenberger1968optimization}
D.~G. Luenberger.
\newblock {\em Optimization by Vector Space Methods}.
\newblock John Wiley \& Sons, 1968.

\bibitem{McDonaldPS15}
A.~M. McDonald, M.~Pontil, and D.~Stamos.
\newblock New perspectives on $k$-support and cluster norms.
\newblock 2015.

\bibitem{miller2012identification}
D.~N. Miller and R.~A. de~Callafon.
\newblock Identification of linear time-invariant systems via constrained
  step-based realization.
\newblock {\em IFAC Proceedings Volumes}, 45(16):1155 -- 1160, 2012.
\newblock 16th IFAC Symposium on System Identification.

\bibitem{parikh2014proximal}
N.~Parikh and S.~Boyd.
\newblock Proximal algorithms.
\newblock {\em Foundations and Trends in Optimization}, 1(3):127--239, 2014.

\bibitem{peaucelle2002user}
D.~Peaucelle, D.~Henrion, Y.~Labit, and K.~Taitz.
\newblock User’s guide for {SEDUMI} {INTERFACE} 1.04.
\newblock 2002.
\newblock {LAAS-CNRS}, Toulouse.

\bibitem{recht2010guaranteed}
B.~Recht, M.~Fazel, and P.~A. Parrilo.
\newblock Guaranteed minimum-rank solutions of linear matrix equations via
  nuclear norm minimization.
\newblock {\em {SIAM} {R}eview}, 52(3):471--501, 2010.

\bibitem{velu2013multivariate}
G.~C. Reinsel and R.~Velu.
\newblock {\em {M}ultivariate {R}educed-{R}ank {R}egression: {T}heory and
  {A}pplications}, volume 136 of {\em {L}ecture {N}otes in {S}tatistics}.
\newblock Springer New York, 1998.

\bibitem{rockafellar1970convex}
R.~T. Rockafellar.
\newblock {\em {C}onvex {A}nalysis}.
\newblock Princeton University Press, 1970.

\bibitem{tibshirani1996regression}
R.~Tibshirani.
\newblock Regression shrinkage and selection via the lasso.
\newblock {\em Journal of the Royal Statistical Society}, 58(1):267--288, 1996.

\bibitem{toh2004implementation}
K.~C. Toh, R.~H. Tutuncu, and M.~J. Todd.
\newblock On the implementation of {SDPT}3 (version 3.1) -- a {MATLAB} software
  package for semidefinite-quadratic-linear programming.
\newblock In {\em IEEE International Conference on Robotics and Automation},
  pages 290--296, 2004.

\bibitem{trefethen1997numerical}
L.~N. Trefethen and D.~Bau~III.
\newblock {\em Numerical Linear Algebra}.
\newblock SIAM, 1997.

\bibitem{vidal2016generalized}
R.~Vidal, Y.~Ma, and S.~S. Sastry.
\newblock {\em Generalized Principal Component Analysis}, volume~40 of {\em
  Interdisciplinary Applied Mathematics}.
\newblock Springer-Verlag New York, 2016.

\bibitem{villa2014proximal}
S.~Villa, L.~Rosasco, S.~Mosci, and A.~Verri.
\newblock Proximal methods for the latent group lasso penalty.
\newblock {\em Computational Optimization and Applications}, 58(2):381--407,
  2014.

\bibitem{watson1992characterization}
G.~Watson.
\newblock Characterization of the subdifferential of some matrix norms.
\newblock {\em Linear Algebra and its Applications}, 170:33 -- 45, 1992.

\bibitem{wu2014moreau}
B.~Wu, C.~Ding, D.~Sun, and K.-C. Toh.
\newblock On the {M}oreau--{Y}osida regularization of the vector {$k$}-norm
  related functions.
\newblock {\em SIAM Journal on Optimization}, 24(2):766--794, 2014.

\bibitem{zare2014low}
A.~Zare, Y.~Chen, M.~R. Jovanovi{\'c}, and T.~T. Georgiou.
\newblock Low-complexity modeling of partially available second-order
  statistics: Theory and an efficient matrix completion algorithm.
\newblock {\em IEEE Transactions on Automatic Control}, 62(3):1368--1383, March
  2017.

\bibitem{zoltowski2014sparsity}
D.~M. Zoltowski, N.~Dhingra, F.~Lin, and M.~R. Jovanovi{\'c}.
\newblock {S}parsity-promoting optimal control of spatially-invariant systems.
\newblock In {\em 2014 American Control Conference}, pages 1255--1260, June
  2014.

\bibitem{zorzi2015factor}
M.~Zorzi and R.~Sepulchre.
\newblock Factor analysis of moving average processes.
\newblock In {\em 2015 European Control Conference (ECC)}, pages 3579--3584,
  2015.

\bibitem{zorzi2016identification}
M.~Zorzi and R.~Sepulchre.
\newblock {AR} identification of latent-variable graphical models.
\newblock {\em IEEE Transactions on Automatic Control}, 61(9):2327--2340, 2016.

\end{thebibliography}

\appendix
	\section{Appendix}
	\counterwithin{prop}{section}
	\counterwithin{defn}{section}
	\counterwithin{cor}{section}
	\counterwithin{lem}{section}
	\counterwithin{algorithm}{section}
	\setcounter{prop}{0}
	\setcounter{defn}{0}
	\setcounter{lem}{0}
	\setcounter{cor}{0}
	\setcounter{algorithm}{0}
\subsection{Search Rules}
\begin{lem}
	\label{lem:rules_t}
	Let $f$ be proper, closed and convex, $z_1 \geq \dots \geq z_n\geq 0$ and $\left(y^{(t)},w^{(t)}\right)$ denote the $t$-depended solution to \begin{equation}
	\label{opt:conj_epi_vec_t}
	\begin{aligned}
	& \underset{y,w}{\textnormal{minimize}}
	& & f(w)+ \frac{\gamma}{2}\sum_{i=1}^n(y_i - z_i)^2 \\
	& \textnormal{subject to}
	& & w \geq  g^D_r(y), \ y \in \mathbb{R}^n,\\
	& & & y_{r-t+1} = \dots = y_{r} \geq \dots \geq y_n.
	\end{aligned}
	\end{equation} where $1\leq t \leq r$. Further, let $\left(y^{(t^\opts)},w^{(t^\opts)}\right)$ be the solution to \begin{equation}
	\label{opt:conj_epi_vec}
	\begin{aligned}
	& \underset{y,w}{\textnormal{minimize}}
	& & f(w)+ \frac{\gamma}{2}\sum_{i=1}^n(y_i - z_i)^2 \\
	& \textnormal{subject to}
	& & w \geq g^D_r(y), \ y \in \mathbb{R}^n,\\
	& & & y_1 \geq \dots \geq y_n,
	\end{aligned}
	\end{equation}	
	such that $y^{(t^\opts)}_{r-t^\opts} > y_{r-t^\opts+1}^{(t^\opts)}$ and $y^{(t^\opts)}_{r-t^\opts} = y_{r-t^\opts+1}^{(t^\opts)}$ if $t^\opts = r$. Then,
	\begin{enumerate}[label=\roman*.]
		\item $t^\opts = \min \left \lbrace \lbrace t: y^{(t)}_{r-t} > y_{r-t+1}^{(t)} \rbrace \cup \lbrace r \rbrace \right \rbrace$. \label{item:i}
		\item If $y^{(t')}_{r-t'} \geq y_{r-t'+1}^{(t')}$ then $y^{(t)}_{r-t} \geq y_{r-t+1}^{(t)}$ for all $t \geq t'$. \label{item:ii}
		\item If $y^{(t')}_{r-t'} < y_{r-t'+1}^{(t')}$ then $y^{(t)}_{r-t} < y_{r-t+1}^{(t)}$ for all $t \leq t'$.\label{item:iii}
	\end{enumerate}
	In particular, $t^\opts$ can be found by a search over $t$, where $t$ is increased/decreased according to the following rules:
	\begin{enumerate}[label=\Roman*.]
		\item $y^{(t)}_{r-t} \geq y_{r-t+1}^{(t)}$ for all $t \geq t^\opts$.\label{item:I}
		\item $y^{(t)}_{r-t} \leq y_{r-t+1}^{(t)}$ for all $t < t^\opts$. \label{item:II}
		\item If $t < t^\opts$ and $y^{(t)}_{r-t} = y_{r-t+1}^{(t)}$ then $\left( y^{(t)},w^{(t)} \right)  = \left( y^{(t^\opts)},w^{(t^\opts)} \right) $.\label{item:III}
	\end{enumerate}
\end{lem}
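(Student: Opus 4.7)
The plan is to derive all three claims (i)--(iii)---and hence the search rules I--III---from a single monotonicity analysis of the family $\bigl\{\bigl(y^{(t)},w^{(t)}\bigr)\bigr\}_{t=1}^r$. The unifying observation is that \cref{opt:conj_epi_vec_t} is simply \cref{opt:conj_epi_vec} with the additional equality constraint $y_{r-t+1} = \dots = y_r$ imposed on the top block. Hence, whenever the unique minimizer of \cref{opt:conj_epi_vec} exhibits a flat block of exactly the right size at the top, the two problems have the same optimizer, and the problem of identifying $t^\opts$ reduces to locating that natural plateau.

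The identification of $t^\opts$ in claim (i) is then almost immediate: by definition $\bigl(y^{(t^\opts)},w^{(t^\opts)}\bigr)$ solves \cref{opt:conj_epi_vec} and has a flat plateau of exactly size $t^\opts$, so it is feasible (and therefore optimal by strict convexity in $y$) for \cref{opt:conj_epi_vec_t} at $t=t^\opts$. The strict inequality $y^{(t^\opts)}_{r-t^\opts} > y^{(t^\opts)}_{r-t^\opts+1}$ (or the boundary case $t^\opts = r$) then distinguishes $t^\opts$ as the \emph{smallest} index $t$ with $y^{(t)}_{r-t} > y^{(t)}_{r-t+1}$: for any $t<t^\opts$ the artificial flat block of \cref{opt:conj_epi_vec_t} is shorter than the natural plateau of the unrestricted problem, and the resulting optimizer is forced to extend its plateau beyond $r-t+1$, i.e.\ satisfy $y^{(t)}_{r-t} \leq y^{(t)}_{r-t+1}$.

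The core step is the monotonicity claim (ii), which I plan to prove by comparing the solutions at $t$ and $t+1$. Going from $t$ to $t+1$ adds the single equality $y_{r-t} = y_{r-t+1}$. If $y^{(t)}_{r-t} = y^{(t)}_{r-t+1}$ this constraint is already inactive, so $y^{(t+1)} = y^{(t)}$ and monotonicity is trivially preserved; if $y^{(t)}_{r-t} > y^{(t)}_{r-t+1}$, forcing equality averages the two coordinates, lowering the plateau value and raising the coordinate at position $r-t$. A first-order optimality argument that uses $z_{r-t-1}\geq z_{r-t}$ together with the monotonicity of the subdifferential of $g^D_r$ along a flat block then shows that $y^{(t+1)}_{r-t-1}\geq y^{(t+1)}_{r-t}$ continues to hold. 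Claim (iii) is the contrapositive and follows by the same comparison. The equality branch of rule III is handled by observing that if $y^{(t)}_{r-t} = y^{(t)}_{r-t+1}$ for some $t < t^\opts$, then the extra equalities of \cref{opt:conj_epi_vec_t} at $t^\opts$ are all slack at that point, so uniqueness of the minimizer forces $\bigl(y^{(t)},w^{(t)}\bigr) = \bigl(y^{(t^\opts)},w^{(t^\opts)}\bigr)$. The three rules I--III are then direct restatements of (i)--(iii).

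The main obstacle is making the averaging/first-order argument in (ii) fully rigorous, since the subdifferential of $g^D_r$ is set-valued along a flat block and the Lagrange multipliers for the plateau equalities interact with the multiplier for $w \geq g^D_r(y)$. A clean way around this is to perform the substitution $y_{r-t+1} = \dots = y_r = v$ in \cref{opt:conj_epi_vec_t} and rewrite the problem in $n-t+1$ variables using the truncated gauge $g^D_{r,0,t}$ from \cref{eq:transform}; the reduced problem is strictly convex in $y$ with a single non-smooth constraint, and its KKT conditions transparently yield the required averaging behaviour as $t$ varies, reducing the comparison to a one-dimensional monotonicity statement for the common plateau value as a function of $t$.
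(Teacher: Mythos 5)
Your outline of items (i) and (iii) and of rules I--III is essentially sound and matches the paper's logic: the $t$-problem \cref{opt:conj_epi_vec_t} gains one equality constraint as $t$ increases, so its optimal value is nondecreasing in $t$; feasibility of $y^{(t)}$ for \cref{opt:conj_epi_vec} together with uniqueness of the minimizer then pins down $t^\opts$ and yields the equality branch of rule III. The problem is item (ii), which is the entire technical content of the lemma, and there your proposal has a genuine gap. You assert that if $y^{(t)}_{r-t}>y^{(t)}_{r-t+1}$ then passing to $t+1$ "averages the two coordinates" and that "a first-order optimality argument \dots shows that $y^{(t+1)}_{r-t-1}\geq y^{(t+1)}_{r-t}$ continues to hold." But the solution at $t+1$ is not obtained by averaging two coordinates of the solution at $t$: all coordinates, the plateau value, $w$, and the multiplier $\mu$ of the epigraph constraint move simultaneously, and for a general symmetric gauge the subdifferential $\partial g^D_r$ along a flat block gives you no transparent control over how the free coordinate $y^{(t+1)}_{r-t-1}$ compares with the new plateau value. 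You acknowledge this yourself as "the main obstacle," and the proposed repair---rewriting via the truncated gauge and reducing to "a one-dimensional monotonicity statement for the common plateau value"---is not carried out and is not obviously sufficient: the inequality to be proved compares a free coordinate against the plateau, not the plateau against itself, and both depend on $t$ through the coupled KKT system.

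The paper closes this gap with a purely primal argument that avoids the KKT system altogether. Assuming $y^{(t')}_{r-t'}\geq y^{(t')}_{r-t'+1}$ but $y^{(t'+1)}_{r-t'-1}<y^{(t'+1)}_{r-t'}$, it forms the convex combination $\hat{y}=(1-\alpha)y^{(t'+1)}+\alpha y^{(t')}$ with $\alpha$ chosen (by an intermediate-value argument using the two opposite inequalities) so that $\hat{y}_{r-t'-1}=\hat{y}_{r-t'+1}$, and then partially sorts the first $r-t'$ entries. Feasibility of the resulting point for the $(t'+1)$-problem follows from permutation invariance of $g^D_r$, and the cost does not increase by the rearrangement inequality $\tfrac12\bigl((z_i-\hat{y}_i)^2+(z_j-\hat{y}_j)^2\bigr)\geq\tfrac12\bigl((z_i-\hat{y}_j)^2+(z_j-\hat{y}_i)^2\bigr)$ when $z_i\geq z_j$ and $\hat{y}_i\leq\hat{y}_j$. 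This contradicts uniqueness (strong convexity in $y$) of the solution at $t'+1$. If you want to salvage your KKT route you would need to establish the comparison between the free coordinates and the plateau uniformly over all symmetric gauges, which is precisely what the convex-combination construction sidesteps; as written, your item (ii) is an assertion rather than a proof.
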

\label{sec:proof_rules_t}
\begin{proof}
Throughout this proof, we let $p(t)$ denote the optimal cost of \cref{opt:conj_epi_vec_t} as a function of $t$. Since adding constraints cannot reduce the optimal cost, $p$ is a nondecreasing function.\\
\\
\Cref{item:i}: 
By the same reasoning that led to \cref{opt:conj_epi_vec_t}, it holds that
\begin{align}
y_1^{(t)} \geq \dots \geq y_{r-t}^{(t)}\ \text{for} \ 1\leq t \leq r \label{eq:sort_r-t}.
\end{align}
Using \cref{eq:sort_r-t}, the set $\lbrace t: y^{(t)}_{r-t} > y_{r-t+1}^{(t)} \rbrace \cup \lbrace r  \rbrace$ contains all $t$ for which the solution of \cref{opt:conj_epi_vec_t} is feasible for \cref{opt:conj_epi_vec}. Since $p$ is nondecreasing and $\left(y^{(t^\opts)},w^{(t^\opts)}\right)$ is unique, the first claim follows. \\
\\
\Cref{item:ii}: The second claim is proven by contradiction. Let $(y^{(t')},w^{(t')})$ be such that $y^{(t')}_{r-t'} \geq y_{r-t'+1}^{(t')}$. Further assume that  $y^{(t'+1)}_{r-t'-1} < y_{r-t'}^{(t'+1)}$. In the following, we construct another solution $(\tilde{y},\tilde{w}) \in \mathbb{R}^{q+1}$ to \cref{opt:conj_epi_vec_t} with $t = t' +1$, which has a cost that is no larger than $p(t' +1)$. However, \cref{opt:conj_epi_vec_t} has a unique solution due to strong convexity of the cost function. This yields the desired contradiction.

The contradicting solution is constructed as a convex combination \linebreak $\tilde{w}=(1-\alpha)w^{(t'+1)}+\alpha w^{(t')}$ with $\alpha\in(0,1]$ and a partially sorted convex combination of $y^{(t')}$ and $y^{(t'+1)}$ with the same $\alpha$. Let $\hat{y}:=(1-\alpha)y^{(t'+1)}+\alpha y^{(t')}$ and let
\begin{align*}
\tilde{y}:=(\textnormal{sort}(\hat{y}_1,\ldots,\hat{y}_{r-t'-2},\hat{y}_{r-t'}),\hat{y}_{r-t'-1},\hat{y}_{r-t'+1},\ldots,\hat{y}_{q}),
\end{align*}
be the partially sorted convex combination.

To select $\alpha$, we note that by assumption, 
\begin{equation*}
y^{(t')}_{r-t'-1}\geq y^{(t')}_{r-t'} \geq y_{r-t'+1}^{(t')} \quad \text{and} \quad y^{(t'+1)}_{r-t'-1} < y_{r-t'}^{(t'+1)}=y_{r-t'+1}^{(t'+1)}.
\end{equation*}
Therefore, there exists an $\alpha \in (0,1]$ such that
\begin{align*}
\tilde{y}_{r-t'}=\hat{y}_{r-t'-1}&=(1-\alpha) y^{(t'+1)}_{r-t'-1} + \alpha y^{(t')}_{r-t'-1}\\
&=  (1-\alpha) y^{(t'+1)}_{r-t'+1} + \alpha y^{(t')}_{r-t'+1}=\hat{y}_{r-t'+1}=\tilde{y}_{r-t'+1}.
\end{align*}
Since 
$$y_{r-t'+1}^{(t')}=\cdots=y_{r}^{(t')} \quad \text{and} \quad y_{r-t'-1}^{(t'+1)}=\cdots=y_{r}^{(t'+1)},$$ it follows that $$\tilde{y}_{r-t'}=\cdots=\tilde{y}_{r}.$$ Furthermore, the construction of $\tilde{y}$ as well as the sorting give that $$\tilde{y}_{r}\geq\cdots\geq\tilde{y}_q \quad \text{and} \quad \tilde{y}_{1}\geq\cdots\geq \tilde{y}_{r-t'-1}.$$ Hence, $\tilde{y}$ satisfies the chain of inequalities in \cref{opt:conj_epi_vec_t} for $t=t'+1$.

It remains to show that $\tilde{y}$ satisfies the epigraph constraint and that the cost is not higher than $p(t'+1)$. These properties are already fulfilled for $\hat{y}$ being a convex combination of two feasible points with costs $p(t')$ and $p(t'+1)$, respectively, where $p(t') \leq p(t'+1)$. Therefore, it is left to show that the sorting involved in $\tilde{y}$ maintains these properties. First, we show that sorting of any sub-vector in $\hat{y}$ does not increase the cost. Suppose that $z_i\geq z_j$, $\hat{y}_i\leq \hat{y}_j$, i.e., $\hat{y}$ is not sorted the same way as $z$. Then
\begin{align*}
\tfrac{1}{2}\left((z_i-\hat{y}_i)^2+(z_j-\hat{y}_j)^2\right) &= (z_i-z_j)(\hat{y}_j-\hat{y}_i)+\tfrac{1}{2}\left((z_i-\hat{y}_j)^2+(z_j-\hat{y}_i)^2\right)\\
&\geq \tfrac{1}{2}\left((z_i-\hat{y}_j)^2+(z_j-\hat{y}_i)^2\right),
\end{align*}
and thus the cost is not increased by sorting $\hat{y}$ or any sub-vector of it. Further, a permutation of the first $r$ elements of $\hat{y}$ does not influence the epigraph constraint, because $g^D_r(\hat{y})$ is permutation invariant by definition.

Next notice that $\tilde{y}$ is obtained from $\hat{y}$ by first swapping $\hat{y}_{r-t'-1}$ and $\hat{y}_{r-t'}$. From the choice of $\alpha$, we conclude that $$\hat{y}_{r-t'}=(1-\alpha)y_{r-t'}^{(t'+1)}+\alpha y_{r-t'}^{(t')}\geq(1-\alpha)y_{r-t'+1}^{(t'+1)}+\alpha y_{r-t'+1}^{(t')}=\hat{y}_{r-t'+1}=\hat{y}_{r-t'-1}.$$ Thus, this swap is a sorting which does neither increase the cost, nor does it violate the epigraph constraint. Analogously, sorting the first $r-t'$ elements of the resulting vector to obtain $\tilde{y}$ has the same effect and therefore we receive the desired contradiction.\\
\\
\Cref{item:iii}: Suppose that there exist $t$ and $t'$ with $t'>t$ such that $y_{r-t'}^{(t')}<y_{r-t'+1}^{(t')}$ and $y_{r-t}^{(t)}\geq y_{r-t+1}^{(t)}$. Then \Cref{item:ii} shows that $y_{r-t'}^{(t')}\geq y_{r-t'+1}^{(t')}$, which is a contradiction.\\
\\
Items~\ref{item:I} to \ref{item:III}: The statements follow immediately from Items~\ref{item:i} to \ref{item:iii}.
\end{proof}
\begin{lem}
	\label{lem:rules_s}
	Let $f$ and $z$ be as in \cref{lem:rules_t} and $\left(y^{(t,s)},w^{(t,s)}\right)$ denote the $s$-depended solution to \begin{equation}
	\label{opt:conj_epi_vec_t_s}
	\begin{aligned}
	& \underset{y,w}{\textnormal{minimize}}
	& & f(w)+ \frac{\gamma}{2}\sum_{i=1}^{n}(y_i - z_i)^2\\
	& \textnormal{subject to}
	& & w \geq g^D_r(y), \ y \in \mathbb{R}^n,\\
	& & & y_{r-t+1} = \dots = y_{r+s}.
	\end{aligned}
	\end{equation} where $0 \leq s \leq r -n$ and $t$ is fixed within $1 \leq t \leq r$. Further, let $\left(y^{(t,s^\opts)},w^{(t,s^\opts)}\right)$ be the solution to \cref{opt:conj_epi_vec_t} such that $y^{(t,s^\opts)}_{r+s^\opts} > y_{r+s^\opts+1}^{(t,s^\opts)}$ and $y^{(t,s^\opts)}_{r+s^\opts} = y_{r+s^\opts+1}^{(t)}$ if $s^\opts = n-r$. Then,
	\begin{enumerate}[label=\roman*.]
		\item $s^\opts = \min \left \lbrace \lbrace s: y^{(t,s^\opts)}_{r+s^\opts} > y_{r+s^\opts+1}^{(t,s^\opts)} \rbrace \cup \lbrace n-r \rbrace \right \rbrace$.
		\item If $y^{(t,s')}_{r+s'} \geq y_{r+s'+1}^{(t,s')}$ then $y^{(t,s)}_{r+s} \geq y_{r+s+1}^{(t,s)}$ for all $s \geq s'$. 
		\item If $y^{(t,s')}_{r+s'} < y_{r+s'+1}^{(t,s')}$ then $y^{(t,s)}_{r+s} < y_{r+s+1}^{(t,s)}$ for all $s \leq s'$.
	\end{enumerate}
	In particular, $s^\opts$ can be found by a search over $s$, where $s$ is increased/decreased according to the following rules:
	\begin{enumerate}[label=\Roman*.]
		\item $y^{(t,s)}_{r+s} \geq y_{r+s+1}^{(t,s)}$ for all $s \geq s^\opts$.
		\item $y^{(t,s)}_{r+s} \leq y_{r+s+1}^{(t,s)}$ for all $s < s^\opts$.
		\item If $s < s^\opts$ and $y^{(t,s)}_{r+s} = y_{r+s+1}^{(t,s)}$ then $\left( y^{(t,s)},w^{(t,s)} \right)  = \left( y^{(t,s^\opts)},w^{(t,s^\opts)} \right)$.
	\end{enumerate}
\end{lem}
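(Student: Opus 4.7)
The plan is to adapt the three-part proof of \Cref{lem:rules_t} by reversing the direction in which the equality block grows. Let $q(s)$ denote the optimal value of \cref{opt:conj_epi_vec_t_s}; since raising $s$ only adds equality constraints, $q$ is nondecreasing. A rearrangement argument identical to the one underlying \cref{eq:sort_r-t} shows that any optimizer automatically satisfies $y^{(t,s)}_{r+s+1}\geq\cdots\geq y^{(t,s)}_n$, so the set $\{s: y^{(t,s)}_{r+s} > y^{(t,s)}_{r+s+1}\}\cup\{n-r\}$ is precisely the set of $s$ at which the solution of \cref{opt:conj_epi_vec_t_s} is feasible for \cref{opt:conj_epi_vec_t}. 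Combining the nondecreasingness of $q$ with the uniqueness granted by strong convexity of the objective yields item~i.

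For item~ii, I would argue by contradiction, mirroring the proof of \Cref{lem:rules_t}. Suppose $y^{(t,s')}_{r+s'}\geq y^{(t,s')}_{r+s'+1}$ but $y^{(t,s'+1)}_{r+s'+1} < y^{(t,s'+1)}_{r+s'+2}$. Set $\hat y := (1-\alpha) y^{(t,s'+1)} + \alpha y^{(t,s')}$ and $\tilde w := (1-\alpha) w^{(t,s'+1)} + \alpha w^{(t,s')}$, and denote the two block values by $c_1:=y^{(t,s')}_{r+s'}$ and $c_2:=y^{(t,s'+1)}_{r+s'+1}$. Continuity in $\alpha$, combined with the inequalities
\begin{equation*}
y^{(t,s'+1)}_{r+s'+2} > c_2 \quad \text{and} \quad y^{(t,s')}_{r+s'+2} \leq y^{(t,s')}_{r+s'+1} \leq c_1,
\end{equation*}
produces an $\alpha\in(0,1]$ for which $\hat y_{r+s'+2}$ equals the common block value $\bar c:=(1-\alpha)c_2+\alpha c_1$. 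I then let $\tilde y$ be obtained from $\hat y$ by swapping its entries at positions $r+s'+1$ and $r+s'+2$, followed by a partial sort of the tail $(\tilde y_{r+s'+2},\dots,\tilde y_n)$ in descending order. The swap extends the equality block to include position $r+s'+1$, matching the constraint at $s=s'+1$; the tail sort restores $\tilde y_{r+s'+2}\geq\cdots\geq\tilde y_n$. Because all manipulations occur at indices strictly greater than $r$, the sorted top-$r$ entries of $y$ are unchanged, so $g^D_r(\tilde y)=g^D_r(\hat y)\leq \tilde w$ by convexity of $g^D_r$ and feasibility of $y^{(t,s')}$ and $y^{(t,s'+1)}$. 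The exchange argument from the proof of \Cref{lem:rules_t} applies verbatim, using $z_{r+s'+1}\geq z_{r+s'+2}$ together with $\hat y_{r+s'+2}=\bar c \geq \hat y_{r+s'+1}$ (which follows from $y^{(t,s')}_{r+s'+1}\leq c_1$), and shows that neither the swap nor the tail sort increases the squared-distance cost. Hence $(\tilde y,\tilde w)$ is feasible for \cref{opt:conj_epi_vec_t_s} at $s=s'+1$ with objective value at most $q(s'+1)$; since $\alpha>0$, this contradicts uniqueness of $(y^{(t,s'+1)},w^{(t,s'+1)})$ guaranteed by strong convexity.

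Item~iii is the contrapositive of item~ii, and items I--III follow immediately from items i--iii exactly as in \Cref{lem:rules_t}. The main obstacle is really just the index bookkeeping: the entire construction of \Cref{lem:rules_t} is mirrored, with the triple $(r-t'-1,r-t',r-t'+1)$ replaced by $(r+s'+2,r+s'+1,r+s')$ and the partial sort moved from the head of the vector to the tail. Once the orientation is flipped, the exchange inequalities, the invariance of $g^D_r$ under permutations of indices beyond $r$, and the contradiction with uniqueness are handled identically; the only genuine care required is to verify that each sign/index in the adapted argument is consistent with the decreasing order of $z$.
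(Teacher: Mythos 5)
Your proposal is correct and is exactly the argument the paper intends: it states that the proof of \cref{lem:rules_s} ``goes analogously to the proof of \cref{lem:rules_t}'' and omits it, and your write-up carries out that analogy faithfully (monotone optimal value, convex combination with an intermediate-value choice of $\alpha$, a cost-nonincreasing swap and partial sort, permutation invariance of $g^D_r$, and contradiction with uniqueness). The index bookkeeping and the inequalities justifying the choice of $\alpha$ and the swap direction all check out.
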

The proof of \cref{lem:rules_s} goes analogously to the proof of \cref{lem:rules_t} and is therefore omitted.
\begin{lem}
	\label{lem:coordinate_transf}
	Let $f$ and $z$ be as in \cref{lem:rules_t},  $1 \leq t \leq r$ and $ 0 \leq s \leq n-r$. Moreover, let $\tilde{z} := Tz \in \mathbb{R}^{r-t+1}$ be defined by \cref{eq:transform}
	and be $(\tilde{y}^{(t,s)},w^{(t,s)})$ the $(t,s)$-depended solution to 
	\begin{equation} \label{opt:reduced}
	\begin{aligned}
	& \underset{\tilde{y},w}{\textnormal{minimize}}
	& & f(w)+ \frac{\gamma}{2}\sum_{i=1}^{r-t+1}(\tilde{y}_i - \tilde{z}_i)^2 \\
	& \textnormal{subject to}
	& & w \geq g_{r,s,t}^D(\tilde{y}), \ \tilde{y} \in \mathbb{R}^{r-t+1}
	\end{aligned}
	\end{equation}
	Then $(y^{(t,s)},w^{(t,s)})$ is a solution to \cref{opt:conj_epi_vec_t_s}, where  
	\begin{align}
	y^{(t,s)}_i := \begin{cases}
	\tilde{y}_i^{(t,s)},  &\text{if } 1\leq i \leq r-t,\\
	\dfrac{\tilde{y}_i^{(t,s)}}{\sqrt{t+s}}, 	&\text{if }  r-t+1 \leq i \leq r+s, \\
	z_i, 	&\text{if }  i \geq r+s+1.\\
	\end{cases} \label{eq:tilde_var_y}
	\end{align}
\end{lem}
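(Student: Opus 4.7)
The plan is to show that, under the change of variables \cref{eq:tilde_var_y}, problem \cref{opt:conj_epi_vec_t_s} reduces to problem \cref{opt:reduced} up to an additive constant in the objective, so that any minimizer of the latter lifts to a minimizer of the former. First I would rewrite the quadratic term $\sum_{i=1}^n (y_i - z_i)^2$ block by block. The indices $i\le r-t$ contribute $\sum_{i=1}^{r-t}(\tilde{y}_i - \tilde{z}_i)^2$ directly. On the middle block $r-t+1 \le i \le r+s$, where $y_i \equiv v := \tilde{y}_{r-t+1}/\sqrt{s+t}$, the bias--variance identity gives
\begin{equation*}
\sum_{i=r-t+1}^{r+s}(v - z_i)^2 = (s+t)(v-\bar z)^2 + C, \qquad \bar z := \frac{1}{s+t}\sum_{i=r-t+1}^{r+s} z_i,
\end{equation*}
with $C := \sum_{i=r-t+1}^{r+s}(z_i-\bar z)^2$ a constant independent of $\tilde{y}$ and $w$. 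Since $\sqrt{s+t}\,v = \tilde{y}_{r-t+1}$ and $\sqrt{s+t}\,\bar z = \tilde{z}_{r-t+1}$ by \cref{eq:transform}, the first summand equals $(\tilde{y}_{r-t+1}-\tilde{z}_{r-t+1})^2$, matching the $i=r-t+1$ term in \cref{opt:reduced}. Finally, the tail block $i\ge r+s+1$ contributes $0$ because $y_i = z_i$. So the cost of $(y^{(t,s)},w^{(t,s)})$ in \cref{opt:conj_epi_vec_t_s} equals the cost of $(\tilde{y}^{(t,s)},w^{(t,s)})$ in \cref{opt:reduced} plus $\frac{\gamma}{2}C$.

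Second, I would verify that the epigraph constraint is preserved. By construction, the reconstructed $y^{(t,s)}$ has entries $\tilde{y}_1,\ldots,\tilde{y}_{r-t}$ on the first block, $t+s$ copies of $v$ on the middle block, and $z_{r+s+1},\ldots,z_n$ on the tail. Monotonicity of $z$ together with the search rules of \cref{lem:rules_t,lem:rules_s} forces $\tilde{y}_{r-t}\ge v\ge z_{r+s+1}$ at the optimum, so that the top-$r$ sorted absolute values of $y^{(t,s)}$ are precisely $\tilde{y}_1,\ldots,\tilde{y}_{r-t}$ together with $t$ copies of $v$. Using that $g^D$ is a symmetric gauge, the definition of $g^D_r$ and of $g^D_{r,s,t}$ then yields
\begin{equation*}
g^D_r(y^{(t,s)}) = g^D\bigl(\tilde{y}_1,\ldots,\tilde{y}_{r-t},\underbrace{v,\ldots,v}_{t\text{ times}},0,\ldots,0\bigr) = g^D_{r,s,t}(\tilde{y}^{(t,s)}),
\end{equation*}
so $w\ge g^D_r(y)$ holds iff $w\ge g^D_{r,s,t}(\tilde{y})$ under the identification.

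The final step is to promote the optimality of $(\tilde{y}^{(t,s)},w^{(t,s)})$ in \cref{opt:reduced} to optimality of $(y^{(t,s)},w^{(t,s)})$ in \cref{opt:conj_epi_vec_t_s}. To that end I would take any feasible competitor $(y',w')$ of \cref{opt:conj_epi_vec_t_s} and define $\tilde{y}'_i:=y'_i$ for $i\le r-t$ and $\tilde{y}'_{r-t+1}:=\sqrt{s+t}\,y'_{r-t+1}$. The same computation as above gives that the objective of $(y',w')$ is at least the objective of $(\tilde{y}',w')$ in \cref{opt:reduced} plus $\frac{\gamma}{2}C$, with equality iff $y'_i=z_i$ on the tail. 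Combined with $g^D_{r,s,t}(\tilde{y}')\le g^D_r(y')\le w'$ in the generic case, this shows $(\tilde{y}',w')$ is feasible for \cref{opt:reduced}, whence $(y^{(t,s)},w^{(t,s)})$ is indeed optimal.

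The main obstacle is exactly this last implication: a priori $y'$ on the tail indices $i>r+s$ could have absolute values that exceed $v'$, in which case they would enter the top-$r$ selection in $g^D_r(y')$ and the clean identity above would break. I expect this to be resolved by a permutation/rearrangement argument in the style of the proof of \cref{lem:rules_t}: since $g^D$ is permutation invariant in its arguments and $z$ is sorted nonincreasingly, a pairwise swap within the tail can only decrease the quadratic cost while keeping the constraint feasible, so one may assume without loss of generality that the top-$r$ sorted entries of $|y'|$ sit in the first $r$ positions, after which the reduction is direct.
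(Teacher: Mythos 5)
Your core computation is exactly the paper's proof: the published argument for this lemma consists solely of the substitution $\tilde y_{r-t+1}=\sqrt{t+s}\,y_{r-t+1}$ together with the identity
\begin{equation*}
\sum_{i=r-t+1}^{r+s}(y_{r-t+1}-z_i)^2=\bigl(\tilde y_{r-t+1}-\tilde z_{r-t+1}\bigr)^2+\sum_{i=r-t+1}^{r+s}z_i^2-\frac{1}{t+s}\Bigl(\sum_{i=r-t+1}^{r+s}z_i\Bigr)^2,
\end{equation*}
which is your bias--variance decomposition in a different arrangement. Your additional care with the constraint is in the right spirit, and the competitor direction is actually simpler than your sketch suggests: for any feasible $(y',w')$ of \cref{opt:conj_epi_vec_t_s}, the arguments of $g^D_{r,s,t}(\tilde y')$ are $r$ of the entries of $y'$ (namely $y'_1,\dots,y'_{r-t}$ and $t$ copies of the common value on the middle block), and these are dominated, after sorting, by the top $r$ sorted entries of $y'$; monotonicity of symmetric gauge functions then gives $g^D_{r,s,t}(\tilde y')\le g^D_r(y')\le w'$ directly, with no rearrangement or swap argument needed.

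The genuine gap sits in the opposite direction, and your proposed fix does not close it. Feasibility of the reconstructed point requires $w^{(t,s)}\ge g^D_r(y^{(t,s)})$, while the reduced problem only certifies $w^{(t,s)}\ge g^D_{r,s,t}(\tilde y^{(t,s)})$. If $z_{r+s+1}>\tilde y^{(t,s)}_{r-t+1}/\sqrt{t+s}$, the tail values fixed to $z_i$ in \cref{eq:tilde_var_y} enter the top-$r$ selection of $g^D_r$, and no permutation within the tail changes which values are selected. Concretely, take $g^D=\ell_1$, $n=2$, $r=t=1$, $s=0$, $z=(10,9)$, $\gamma=1$ and $f=\chi_{[0,1]}$: the reduced problem returns $\tilde y_1=1$, $w=1$, but the reconstruction $y^{(1,0)}=(1,9)$ violates $w\ge g^D_1(y)=9$, and the actual solution of \cref{opt:conj_epi_vec_t_s} has $y_2=1\neq z_2$. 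So the statement, read for arbitrary $(t,s)$, needs the ordering $\tilde y^{(t,s)}_{r-t}\ge\tilde y^{(t,s)}_{r-t+1}/\sqrt{t+s}\ge z_{r+s+1}$ as a hypothesis; your appeal to the search rules of \cref{lem:rules_t,lem:rules_s} ``at the optimum'' is circular for a claim quantified over all $(t,s)$. To be fair, the paper's own one-line proof is equally silent here --- the lemma is only ever invoked in the proof of \cref{thm:main} at $(t^\opts,s^\opts)$, where that ordering holds by construction --- but your write-up correctly flags the issue and then resolves the half of it that was never in danger.
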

\begin{proof}
Letting $\tilde{y} \in \mathbb{R}^{r-t+1}$ be defined as
\begin{align}
\tilde{y}_i = \begin{cases}
y_i, \text{ if } 1\leq i \leq r-t,\\
\sqrt{t+s}y_{r-t+1}, 	\text{ if } i = r-t+1,
\end{cases}
\end{align} and notice that
\begin{align*}
\sum_{i = r-t+1}^{r+s} (y_r - z_i)^2 
& = \left(\tilde{y}_{r-t+1}- \tilde{z}_{r-t+1} \right)^2 + \sum_{i = r-t+1}^{r+s} z_i^2 - \left(\frac{1}{\sqrt{t+s}}\sum_{i = r-t+1}^{r+s}  z_i \right)^2,
\end{align*}
yields the reduced dimensional problem \cref{opt:reduced}.
\end{proof}
\begin{lem} 
	\label{lem:sol}
	$(\tilde{y},\tilde{w})$ is a solution to \cref{opt:reduced} if and only if one of the following cases applies: 
	\begin{subequations}
		\label{eq:cases}
		\begin{align}
		&\text{Case 1: }\tilde{y} = \tilde{z} ~\ \Longleftrightarrow ~\ \tilde{w} = \argmin_w f \text{ and } \tilde{w} \geq g^D_{r,s,t}(\tilde{z})\label{test:in_epi}\\
		& \text{Case 2: }  (\tilde{y},\tilde{w}) = 0 ~\ \Longleftrightarrow ~ g_{r,s,t}(\tilde{z}) \leq  \frac{\mu}{\gamma} \text{ and } \mu \in \partial  f(0) \label{test:zero_sol}\\
		& \text{Case 3: } \frac{\gamma}{\mu} (\tilde{z}-\tilde{y}) \in \partial g^D_{r,s,t}(\tilde{y}) ~\  \mu \in \partial f(\tilde{w})\cap \mathbb{R}_{\geq 0} ~\ \text{and} ~\ \tilde{w} = g^D_{r,s,t}(\tilde{y})\label{test:boundary_epi} 
		\end{align}
	\end{subequations}
\end{lem}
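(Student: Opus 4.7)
The plan is to view \cref{opt:reduced} as a standard convex program and derive its KKT conditions. The objective $f(w) + \frac{\gamma}{2}\sum_{i=1}^{r-t+1}(\tilde{y}_i - \tilde{z}_i)^2$ is proper, closed, and convex, and the feasible set $\{(\tilde{y},w) : w \geq g^D_{r,s,t}(\tilde{y})\}$ is convex with nonempty interior (for example $(\tilde{y}_0, w_0) = (0, w_0)$ with any $w_0 > 0$ chosen from $\dom f$, which exists since $f$ is proper and $w \geq 0$ is forced by feasibility). Hence Slater's condition holds, so $(\tilde{y},\tilde{w})$ is optimal if and only if there exists a multiplier $\mu \geq 0$ with
\begin{equation*}
0 \in \gamma(\tilde{y} - \tilde{z}) + \mu\, \partial g^D_{r,s,t}(\tilde{y}),\ \mu \in \partial f(\tilde{w}),\ \mu\,(g^D_{r,s,t}(\tilde{y}) - \tilde{w}) = 0,\ \tilde{w} \geq g^D_{r,s,t}(\tilde{y}).
\end{equation*}

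I would then split into three exhaustive cases according to $\mu$ and $\tilde{y}$. First, if $\mu = 0$, the stationarity condition in $\tilde{y}$ forces $\tilde{y} = \tilde{z}$, the stationarity condition in $w$ becomes $0 \in \partial f(\tilde{w})$ so that $\tilde{w} = \argmin_w f$, and primal feasibility yields $\tilde{w} \geq g^D_{r,s,t}(\tilde{z})$; this is precisely \cref{test:in_epi}. Second, if $\mu > 0$ and $\tilde{y} \neq 0$, then complementary slackness gives $\tilde{w} = g^D_{r,s,t}(\tilde{y})$, and the first inclusion can be divided by $\mu$ to yield $\frac{\gamma}{\mu}(\tilde{z} - \tilde{y}) \in \partial g^D_{r,s,t}(\tilde{y})$ together with $\mu \in \partial f(\tilde{w}) \cap \mathbb{R}_{\geq 0}$, matching \cref{test:boundary_epi}. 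Third, if $\tilde{y} = 0$ and $\mu > 0$, then $g^D_{r,s,t}(0) = 0$ combined with complementarity and feasibility forces $\tilde{w} = 0$, and the first inclusion collapses to $\gamma \tilde{z} \in \mu\, \partial g^D_{r,s,t}(0)$.

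The main technical step is identifying $\partial g^D_{r,s,t}(0)$. Since $g^D_{r,s,t}$ is a finite gauge function whose polar/dual gauge is $g_{r,s,t}$ (as recorded in \Cref{sec:prelim}), its subdifferential at the origin is exactly the polar unit ball $\partial g^D_{r,s,t}(0) = \{v : g_{r,s,t}(v) \leq 1\}$, a standard convex-analytic fact (see e.g.\ \cite{hiriart2013convex}). Substituting this into the inclusion above yields $g_{r,s,t}(\gamma \tilde{z}/\mu) \leq 1$, i.e.\ $g_{r,s,t}(\tilde{z}) \leq \mu/\gamma$, which together with $\mu \in \partial f(0)$ is exactly \cref{test:zero_sol}. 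Because Slater's condition renders the KKT conditions both necessary and sufficient, and because the three cases exhaust all admissible $(\mu,\tilde{y})$ pairs, this establishes the desired equivalence in both directions.
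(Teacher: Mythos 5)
Your proposal is correct and follows essentially the same route as the paper: the paper writes the optimality condition $0 \in \partial(\text{cost}) + \mathcal{N}_{\epi(g^D_{r,s,t})}(\tilde{y},\tilde{w})$ and uses the normal-cone characterization $\{(\mu G,-\mu) : G \in \partial g^D_{r,s,t}(\tilde{y}),\ \mu \geq 0\}$ on the boundary, which is exactly your KKT multiplier formulation, and the same three-way case split ($\mu=0$; $\tilde{y}=0$ with $\partial g^D_{r,s,t}(0)$ equal to the $g_{r,s,t}$-unit ball; active constraint with $\mu>0$) appears in both arguments. Your treatment of Case~2 via the polar unit ball is slightly more explicit than the paper's appeal to \cref{eq:sub_norm}, but the substance is identical.
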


\begin{proof}
A solution $(\tilde{y},\tilde{w})$ to \cref{opt:reduced} fulfills by \cite[Theorem~VI.2.2.1]{hiriart2013convex}
\begin{align}
\label{eq:zero_opt}
0 \in \begin{pmatrix}
\gamma (\tilde{y} -\tilde{z})  \\
\partial f(w^\opts)
\end{pmatrix} + \mathcal{N}_{\epi(g_{r,s,t}^D)}(\tilde{y},\tilde{w})
\end{align}
where $\mathcal{N}$ denotes the normal cone to $\epi(g^D_{r,s,t})$ and the summation is understood set-wise. Then by \cite[Proposition~VI.1.3.1]{hiriart2013convex} 
\begin{align}
\label{eq:normal_epi}
\mathcal{N}_{\epi(g^D_{r,s,t})}(\tilde{y},\tilde{w}) = \begin{cases}
\{(\mu G, -\mu): G \in \partial g^D_{r,s,t}(\tilde{y}), \ \mu \geq 0 \} & \text{if } \tilde{w} = g^D_{r,s,t}(\tilde{y})\\
$\{0\}$														   &  \text{if } (\tilde{y},\tilde{w}) \in \inter(\epi(g^D_{r,s,t}))
\end{cases}
\end{align} 
which is why we need to distinguish the cases $\tilde{y} = \tilde{z}$ and $\tilde{w} = g^D_{r,s,t}(\tilde{y})$. Thus the proof follows by invoking \cref{eq:sub_norm}.
\end{proof}

\subsection{Proof to \cref{thm:main}}
\begin{proof}
\label{sec:proof_main}
First notice that $Y^\opts = \sum_{i=1}^n \sigma_i(Y^\opts)u_iv_i^\transp$ with $Z = \sum_{i=1}^n \sigma_i(Z)u_iv_i^\transp$, because cost and constraint in~\cref{opt:dual_prox} are unitarily invariant (see~\cite{watson1992characterization,lewis1995convex}). Consequently, it is equivalent to consider the vector-valued problem \cref{opt:conj_epi_vec} with $z_i = \sigma_i(Z)$ and $y_i = \sigma_i(Y)$ for  $\ 1 \leq i \leq n$. 
\begin{rem}
	It is not necessary to explicitly restrict $y$ to be nonnegative. The unique solution $(y^\opts,w^\opts)$ to \cref{opt:conj_epi_vec} fulfills $0 \leq y_i^\opts \leq z_i$ for $1 \leq i \leq n$. The upper bound holds, because otherwise by \cite[Theorem~7.4.8.4]{horn2012matrix} $$g^D_r(\bar{y}) \leq g^D_r(y^\opts)$$ with $\bar{y}_i^\opts := \min\{z_i,y_i^\opts\}$ and thus $\bar{y}^\opts$ is a feasible solution to \cref{opt:conj_epi_vec} with smaller cost. Similarly, the lower bound holds, because otherwise by \Cref{def:gauge:absolute} $\bar{y}^\opts$ with $\bar{y}_i^\opts = \max\{0,y_i^\opts\}$ is a feasible solution to \cref{opt:conj_epi_vec} with smaller cost \label{rem:bounds_on_solution}
\end{rem}
Then there exists $t^\opts$ such that $y^\opts = \sigma(Y^\opts)$ fulfills 
\begin{align}
y^\opts_{r-t^\opts} > y^\opts_{r-t^\opts+1} = \dots = y^\opts_r, \label{eq:sort_d}
\end{align}
where $t^\opts = r$ if $y^\opts_1 = y^\opts_r$. This assumption implies that  $y_{r-t^\opts} \geq y_{r-t^\opts+1}$ is assumed to be inactive and therefore can be removed from~\cref{opt:conj_epi_vec}. Then also the constraints $$y_1 \geq \dots \geq y_{r-t^\opts}$$ can be removed, because the cost function and the sorting of $z$ ensures that the solution will always fulfill them. This yields the equivalence to \cref{opt:conj_epi_vec_t}. Thus, solving \cref{opt:conj_epi_vec} reduces to finding $t^\opts$ such that \cref{opt:conj_epi_vec_t} solves \cref{opt:conj_epi_vec}. 

In order to solve \cref{opt:conj_epi_vec_t}, one can proceed similarly as with \cref{opt:conj_epi_vec}. There exists $s_t^\opts\geq 0$ such that the solution $(y^{(t)},w^{(t)})$ to \cref{opt:conj_epi_vec_t} satisfies
\begin{align*}
y^{(t)}_{r-t+1} = \dots = y^{(t)}_{r+s_t^\opts} > y^{(t)}_{r+s_t^\opts+1},
\end{align*}
where $s_t^\opts = n-r$ if $y^{(t)}_{r} = y^{(t)}_n$. As before, this allows us to remove the inactive constraint $y_{r+s_t^\opts} \geq y_{r+s_t^\opts+1}$. Then the constraints $y_{r+s_t^\opts+1} \geq \dots \geq y_n$ become redundant, because $y_{j}^{(t)} = z_j, \ j \geq r+s+1$. Therefore, we are left with \cref{opt:conj_epi_vec_t_s}, which by \Cref{lem:coordinate_transf,lem:sol} proves \cref{test:in_epi_thm,test:zero_sol_thm,test:boundary_epi_thm}. The remainder of the theorem is a direct application of \cref{lem:rules_s,lem:rules_t}.
\end{proof}
\subsection{Proof to \cref{prop:ell_2_explicit}}
\label{proof:ell_2_explicit}
\begin{proof}
	For $\tau > 0$ and a gauge function $\tilde{g}$ it holds that  $g = \tau \tilde{g}$ is gauge function with $g^D = \frac{\tilde{g}}{\tau}$. Setting $\gamma =1$ and $f(w) = \frac{1}{2}(w +z_v)$ in \cref{thm:main},  \cref{test:in_epi_thm,test:zero_sol_thm,test:boundary_epi_thm} then become
	\begin{subequations}
		\label{eq:cases_epi}
		\begin{align}
		& (\tilde{y},\tilde{w}) = (\tilde{z},zv) ~\ \Longleftrightarrow ~\ -\tau z_v \geq \tilde{g}^D_{r,s,t}(\tilde{z})\label{test:in_epi_epi}\\
		&(\tilde{y},\tilde{w}) = 0 ~\ \Longleftrightarrow ~\ \tilde{g}_{r,s,t}(\tilde{z}) \leq  \frac{z_v}{\tau} \label{test:zero_sol_epi}\\
		&\frac{\tau}{\mu}(\tilde{z}-\tilde{y}) \in  \partial {\tilde{g}^D_{r,s,t}}(\tilde{y}),~\  \mu = \tilde{w} + z_v \geq 0 ~\ \text{and} ~\ \tau \tilde{w} = \tilde{g}^D_{r,s,t}(\tilde{y}) \label{test:boundary_epi_epi} 
		\end{align}
	\end{subequations}
	For our particular case $\tilde{g} = \ell_2$, it follows immediately that \cref{test:in_epi_epi,test:zero_sol_epi} correspond to \cref{eq:ellip,eq:ellip_polar}. Furthermore, by taking the gradient of $g^D_{r,s,t}$,  \cref{test:boundary_epi_epi} becomes  \cref{eq:sol_ell_2_tilde1,eq:sol_ell2_w,eq:sol_ell_2_tilde2} with the constraints $\mu \geq 0$ and $\tau \tilde{w} = {g^D_{r,s,t}(\tilde{y})}$. Thus it is left to compute $\mu \geq 0$. Plugging \cref{eq:sol_ell_2_tilde1,eq:sol_ell2_w,eq:sol_ell_2_tilde2} into $\tau^2 \tilde{w}^2 = {g^D_{r,s,t}(\tilde{y})}^2$ and making some rearrangements yields
	\begin{align*}
	1 =  \frac{\sum_{i=1}^{r-t}\tilde{z}_i^2}{\left(\tilde{w} \tau+\frac{\mu}{\tau}\right)^2} + \frac{t}{s+t}\frac{\tilde{z}_{r-t+1}^2}{\left(\tilde{w} \tau+\frac{\mu t}{(s+t)\tau}\right)^2}.
	\end{align*}
	Then defining $c_1 := \sum_{i=1}^{r-t} \tilde{z}^2_i$ and $c_2 := \sqrt{t+s}\tilde{z}_{r-t+1}$, this can be rewritten as the fourth order polynomial equation \cref{eq:poly_proj} which can be solved explicitly for unique $\mu \geq 0$ after the substitution \cref{eq:sol_ell2_w} is performed. This proves the first part of \cref{prop:ell_2_explicit}. 
	For $f(w) = \chi_{[0,\gamma]}(w)$,  \cref{test:in_epi_thm,test:zero_sol_thm,test:boundary_epi_thm} are
	\begin{subequations}
		\begin{align}
		& \tilde{y} = \tilde{z} ~\ \Longleftrightarrow ~\ \tau \geq \tilde{g}^D_{r,s,t}(\tilde{z})\label{test:in_epi_norm}\\
		&\frac{\gamma}{\mu}(\tilde{z}-\tilde{y}) \in  \partial \tilde{g}^D_{r,s,t}(\tilde{y}),~\  \mu \geq 0  ~\ \text{and} \ \tau = \tilde{g}^D_{r,s,t}(\tilde{y}) \label{test:boundary_epi_norm} 
		\end{align}
	\end{subequations}	
	Note that \cref{test:zero_sol_thm} is redundant here, because it coincides with \cref{test:in_epi_norm}. Hence, for $g = \ell_2$ \cref{test:in_epi_norm} becomes \cref{eq:ellip} with $z_v = -1$ and \cref{test:boundary_epi_norm} is equivalent to \cref{eq:poly_proj,eq:sol_ell_2_tilde1,eq:sol_ell_2_tilde2} with $\tilde{w} = 1$.
\end{proof}

\subsection{Break Point Search}
\begin{lem}\label{lem:break_point}
	Let $(\tilde{z},z_v)$ fulfill neither of \cref{eq:half,eq:ineq_ell_1_polar}, and $\hat{z}$ and $\alpha$ be as in \cref{prop:ell_1_explicit}. Further, let $\mu^\opts$ be the solution to \begin{align}
	\sum_{i=1}^{r-t+1}\max(\hat{z}_{i}-\alpha_i\mu,0)  +z_v - \mu=0
	\label{eq:mu_eq}
	\end{align} and $\hat{\mu}_k$ be the solution to
	\begin{align}
	\left(\sum_{i=1}^{k} \hat{z}_{i}-\alpha_i\mu\right)+z_v - \mu=0, {\textnormal{\qquad i.e.,\qquad }}\hat{\mu}_k = \frac{z_v+\sum_{i=1}^{k}\hat{z}_{i}}{1+\sum_{i=1}^k\alpha_i}.
	\label{eq:mu_hat_k}
	\end{align}
	Then there exists $k^\opts \in \{1,\dots r-t+1 \}$ such that  
	\begin{align*}
	\hat{z}_{k^\opts}- \alpha_{k^\opts}{\mu^\opts} \geq 0, \qquad \hat{z}_{i}-\alpha_{i}{\mu^\opts} < 0 \ \textnormal{ for all } \ i > k^\opts,
	\end{align*}
	and
	\begin{enumerate}[label = \roman*.]
		\item $\hat{\mu}_{k^\opts} = \mu^\opts.$ \label{item:mu_kopt}
		\item $k^\opts=\max\{k : \hat{z}_k-\alpha_k\hat{\mu}_k\geq 0\}$.\label{item:k_bisect_opt}
		\item If $\hat{z}_k-\alpha_k\hat{\mu}_k\geq 0$, then $\hat{z}_i-\alpha_i\hat{\mu}_i\geq 0$ for all $i \leq k$.\label{item:k_bisect_pos}
		\item If $\hat{z}_k-\alpha_k\hat{\mu}_k< 0$, then $\hat{z}_i-\alpha_i\hat{\mu}_i< 0$ for all $i\geq k$.\label{item:k_bisect_neg}
	\end{enumerate} 
	In particular,
	\begin{enumerate}[label = \Roman*.]
		\item $\hat{z}_k-\alpha_k\hat{\mu}_k\geq 0$ for all $k \leq k^\opts$.\label{item:k_bisect_pos_2}
		\item $\hat{z}_k-\alpha_k\hat{\mu}_k< 0$ for all $k> k^\opts$.\label{item:k_bisect_neg_2}
	\end{enumerate}
\end{lem}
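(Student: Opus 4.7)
The plan is to analyse the piecewise linear, continuous, strictly decreasing function
\[
\phi(\mu):=\sum_{i=1}^{r-t+1}\max(\hat{z}_i-\alpha_i\mu,0)+z_v-\mu,
\]
whose unique real root is $\mu^\opts$. From the two hypotheses, namely that \cref{eq:half,eq:ineq_ell_1_polar} both fail, one reads off $\phi(0)=z_v+\sum_i\hat{z}_i>0$ and $\max_i \hat{z}_i/\alpha_i>z_v$; the first forces $\mu^\opts>0$ and the second will keep the root below the largest breakpoint of $\phi$, guaranteeing $k^\opts\geq 1$ later on. My first concrete step is to observe that the ratios $\hat{z}_i/\alpha_i$ are non-increasing in $i$: by the definitions of $\hat{z}$ and $\alpha$ they equal $\tilde{z}_1,\ldots,\tilde{z}_j,\tfrac{\sqrt{t+s}}{t}\tilde{z}_{r-t+1},\tilde{z}_{j+1},\ldots,\tilde{z}_{r-t}$ in that order, and the sandwiching condition \cref{eq:break_point_j} is exactly the statement that this sequence is monotone.

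From the monotonicity of the ratios, for every $\mu$ the active set $\{i:\hat{z}_i-\alpha_i\mu\geq 0\}$ is a prefix $\{1,\ldots,K(\mu)\}$. Setting $k^\opts:=K(\mu^\opts)$ produces the sign pattern claimed in the displayed conclusion, and $k^\opts\geq 1$ follows because any $\mu>\hat{z}_1/\alpha_1$ would annihilate every $\max$-term in $\phi(\mu)$ and thus give $\phi(\mu)=z_v-\mu<0$. Substituting the prefix structure into $\phi(\mu^\opts)=0$ rearranges to $\sum_{i=1}^{k^\opts}(\hat{z}_i-\alpha_i\mu^\opts)+z_v-\mu^\opts=0$, whose solution in $\mu^\opts$ is exactly $\hat{\mu}_{k^\opts}$, proving \ref{item:mu_kopt}.

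The crux is then the algebraic equivalence
\[
\hat{z}_k/\alpha_k\geq \hat{\mu}_k\ \Longleftrightarrow\ \hat{z}_k/\alpha_k\geq \hat{\mu}_{k-1},
\]
which I would obtain by writing $\hat{\mu}_k=(A+\hat{z}_k)/(B+\alpha_k)$ with $A=z_v+\sum_{i<k}\hat{z}_i$ and $B=1+\sum_{i<k}\alpha_i$, so that $\hat{\mu}_{k-1}=A/B$ and the numerator of $\hat{z}_k/\alpha_k-\hat{\mu}_k$ is $\hat{z}_k B-\alpha_k A$, which shares its sign with $\hat{z}_k/\alpha_k-A/B$. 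Combined with the monotonicity of $\hat{z}_i/\alpha_i$, this gives the two one-step inductions underlying \ref{item:k_bisect_pos} and \ref{item:k_bisect_neg}: if $\hat{z}_k/\alpha_k\geq \hat{\mu}_k$ then $\hat{z}_{k-1}/\alpha_{k-1}\geq \hat{z}_k/\alpha_k\geq \hat{\mu}_{k-1}$, propagating nonnegativity downward; if $\hat{z}_k/\alpha_k<\hat{\mu}_k$ then $\hat{z}_{k+1}/\alpha_{k+1}\leq \hat{z}_k/\alpha_k<\hat{\mu}_k$, and the equivalence applied at index $k+1$ upgrades this to $\hat{z}_{k+1}/\alpha_{k+1}<\hat{\mu}_{k+1}$, propagating negativity upward.

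With these tools the remaining items are bookkeeping. Items \ref{item:k_bisect_pos} and \ref{item:k_bisect_neg} together say that $\{k:\hat{z}_k-\alpha_k\hat{\mu}_k\geq 0\}$ is itself a prefix; evaluating at $k=k^\opts$ via \ref{item:mu_kopt}, together with $\hat{z}_{k^\opts+1}/\alpha_{k^\opts+1}<\mu^\opts=\hat{\mu}_{k^\opts}$ and the equivalence at index $k^\opts+1$, pins $k^\opts$ as its maximal element, giving \ref{item:k_bisect_opt} and, as restatements, \ref{item:k_bisect_pos_2} and \ref{item:k_bisect_neg_2}. The main obstacle I expect is precisely the algebraic equivalence above: the calculation is short but must be carried out cleanly across the boundary cases (empty or full prefix and the two mutually exclusive possibilities in \cref{eq:break_point_j}) so that the induction never slips at an endpoint.
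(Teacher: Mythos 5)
Your proposal is correct, and while it lives in the same break-point framework as the paper's proof (prefix structure of the active set from the sorting in \cref{eq:break_point_j}, item i by substituting the prefix into the full equation, one-step inductions for items iii and iv), the central technical device is genuinely different. The paper introduces the truncated functions $g_k(\mu)=\sum_{i=1}^{k}\max(\hat{z}_i-\alpha_i\mu,0)+z_v-\mu$ and their roots $\mu_k$, and works with three facts about them: $\mu_{k-1}\leq\mu_k$, the equivalence $\hat{z}_k-\alpha_k\mu_k\leq 0\Leftrightarrow\mu_{k-1}=\mu_k$, and the equivalence $\hat{z}_k-\alpha_k\hat{\mu}_k\geq 0\Leftrightarrow\hat{\mu}_k=\mu_k$; the monotone chain in item iii is then assembled by passing back and forth between $\hat{\mu}_k$ and $\mu_k$. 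You bypass the auxiliary roots entirely and replace all three facts by the single mediant-type identity that $\hat{z}_k/\alpha_k-\hat{\mu}_k$ and $\hat{z}_k/\alpha_k-\hat{\mu}_{k-1}$ share the sign of $\hat{z}_kB-\alpha_kA$ with $A=z_v+\sum_{i<k}\hat{z}_i$, $B=1+\sum_{i<k}\alpha_i>0$; combined with the monotonicity of the ratios $\hat{z}_i/\alpha_i$ this gives the same inductions directly. Your route is more elementary and self-contained (pure algebra on the closed-form $\hat{\mu}_k$, no function-theoretic argument about the $g_k$), at the cost of having to handle the $k=1$ boundary where $\hat{\mu}_0=z_v$ explicitly, which you do correctly via the failure of \cref{eq:ineq_ell_1_polar}; the paper's route is the more standard justification pattern for break-point searches and makes the monotonicity of the intermediate roots visible, which is conceptually closer to how the search is actually run. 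One cosmetic caveat: keep track of the $\tau$ (resp.\ $\gamma$) scaling between the $\hat{z},\alpha$ of \cref{prop:ell_1_explicit} and the failed inequalities \cref{eq:half,eq:ineq_ell_1_polar} when you read off $\phi(0)>0$ and $\max_i\hat{z}_i/\alpha_i>z_v$; this does not affect the argument.
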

\label{sec:proof_break}
	\begin{proof}
		We first show some results needed to prove Items~\ref{item:k_bisect_opt} and \ref{item:k_bisect_pos}. Let 
		\begin{align*}
		g_k(\mu):=\sum_{i=1}^{k}\max(\hat{z}_{i}-\alpha_i\mu,0)+z_v-\mu,
		\end{align*}
		Let $\mu_k$ be the unique solution to the equation
		\begin{align*}
		g_k(\mu)=0.
		\end{align*}
		Since all $g_i$ are strictly decreasing in $\mu$ and $$g_k(\mu) = g_{k-1}(\mu)+ \max(\hat{z}_{k}-\alpha_k\mu,0) \geq g_{k-1}(\mu),$$ we conclude that
		\begin{enumerate}[label = \alph*.]
			\item \label{item:mu_inc} $\mu_{k-1}\leq \mu_{k}.$
			\item \label{item:mu_equal} $\hat{z}_k-\alpha_k\mu_k\leq 0 \ \Leftrightarrow \ g_{k-1}(\mu_k)=g_k(\mu_k)=0 \ \Leftrightarrow \ \mu_{k-1}=\mu_k$. 
		\end{enumerate} 
		Moreover, the break point sorting in $\hat{z}$ implies that if $l$ and $\mu$ are such that $\hat{z}_l-\alpha_l\mu\geq 0$, then also $\hat{z}_i-\alpha_i\mu\geq 0$ for all $i\leq l$. Thus,
		\begin{equation*}
		\hat{z}_k-\alpha_k\mu\geq 0  \ \Leftrightarrow \ \sum_{i=1}^k\max(\hat{z}_i-\alpha_i\mu,0)+z_v-\mu=\left(\sum_{i=1}^k\hat{z}_i-\alpha_i\mu\right)+z_v-\mu
		\end{equation*}
		In conjunction with the uniqueness of ${\mu}_k$, this implies that
		\begin{enumerate}[label = \alph*.]
			\setcounter{enumi}{2}
			\item \label{item:mu_agree} $\hat{z}_k-\alpha_k\mu_k\geq 0$ or $\hat{z}_k-\alpha_{k}\hat{\mu}_k\geq 0 \ \Leftrightarrow \ \hat{\mu}_k=\mu_k$.
		\end{enumerate}
	\Cref{item:mu_kopt}: This has already been proven in the discussion before \Cref{lem:break_point}.
	\\
		\Cref{item:k_bisect_opt}: By the definition of $k^\opts$ and \Cref{item:mu_kopt} it holds that
		\begin{equation}
		\hat{z}_{k^\opts}-\alpha_{k^\opts}\hat{\mu}_{k^\opts}\geq 0  \qquad \textnormal{and} \qquad \hat{z}_i-\alpha_i\hat{\mu}_{k^\opts} < 0 \ \ \textnormal{for all} \ \ i > k^\opts. \label{eq:ineq_kopt}
		\end{equation}
		Thus by \Cref{item:mu_agree} $\hat{\mu}_{k^{\opts}} = \mu^\opts =  \mu_{k^{\opts}}$ and
		\begin{align*}
		\hat{z}_i-\alpha_i \mu_{i} < 0 \ \ \textnormal{for all} \ \ i > k^\opts.
		\end{align*}
		Then \Cref{item:mu_equal} implies that
		\begin{equation*}
		\hat{\mu}_{k^{\opts}} = \mu^\opts = \mu_{r-t+1} = \mu_{r-t} = \dots = \mu_{k^{\opts}}.
		\end{equation*}
		Therefore, if there exists $k > k^\opts$ with $\hat{z}_k-\alpha_k\hat{\mu}_k\geq 0$, it will hold by \Cref{item:mu_agree} that 
	\begin{equation*}
	\hat{\mu}_k = \mu_k = \hat{\mu}_{k^{\opts}},
	\end{equation*}
	which contradicts \cref{eq:ineq_kopt}, because
	\begin{equation*}
	0 \leq \hat{z}_k-\alpha_k\hat{\mu}_k = \hat{z}_k-\alpha_k\hat{\mu}_{k^\opts} < 0.
	\end{equation*}
	This proves that $k^\opts = \max \{k: \hat{z}_k-\alpha_k\hat{\mu}_k\geq 0\}$.
	\\
		\Cref{item:k_bisect_pos}: Assume that $\hat{z}_{k}-\alpha_{k}\hat{\mu}_k\geq 0$. Then, by the break point sorting it holds that $ \hat{z}_{k-1}-\alpha_{k-1}\hat{\mu}_k\geq 0$ and by \Cref{item:mu_agree,item:mu_inc} that $\hat{\mu}_k = \mu_k \geq \mu_{k-1}$. Thus, we conclude that
		\begin{align*}
		0\leq \hat{z}_{k-1}-\alpha_{k-1}\hat{\mu}_k=\hat{z}_{k-1}-\alpha_{k-1}\mu_k\leq \hat{z}_{k-1}-\alpha_{k-1}\mu_{k-1}=\hat{z}_{k-1}-\alpha_{k-1}\hat{\mu}_{k-1},
		\end{align*}
		where the last equality follows again by  \Cref{item:mu_agree}.
		The other indices follow inductively. 
		\\
		\Cref{item:k_bisect_neg}: Let on the contrary $k$ be such that $\hat{z}_k-\alpha_k\hat{\mu}_k<0$, but with $i\in\{k,\ldots,r-t+1\}$ such that $\hat{z}_i-\alpha_i\hat{\mu}_i\geq 0$. Then, by \Cref{item:k_bisect_pos}, $\hat{z}_k-\alpha_k\hat{\mu}_k\geq 0$, which is a contradiction.
		\\
		Items~\ref{item:k_bisect_pos_2} and \ref{item:k_bisect_neg_2}: Follow immediately from Items~\ref{item:k_bisect_opt}~to~\ref{item:k_bisect_neg}. 
	\end{proof}

\subsection{Proof to \cref{prop:ell_1_explicit}}
\label{proof:ell_1_explicit}
\begin{proof}
	Analogous to showing \cref{prop:ell_2_explicit}, \cref{test:in_epi_epi} and \cref{test:zero_sol_epi} correspond to \cref{test:in_epi_thm,test:zero_sol_thm}, which translate for $\tilde{g} = \ell_{\infty}$ to
	\begin{subequations}
		\begin{align}
		&(\tilde{y}^\opts,w^\opts) = (\tilde{z},zv) ~\ \Longleftrightarrow ~\ \sum_{i=1}^{r-t} |\tilde{z}_i| + \frac{t}{\sqrt{t+s}} |\tilde{z}_{r-t+1}| \leq - \tau z_v \notag \\
		& (\tilde{y}^\opts,w^\opts) = 0 ~\ \Longleftrightarrow ~\  \max\left(|\tilde{z}_1|,\ldots,|\tilde{z}_{r-t-2}|,\dfrac{\sqrt{t+s}}{t}|\tilde{z}_{r-t+1}|\right) \leq \frac{z_v}{\gamma} \notag
		\end{align} 
		Since $\tilde{z}$ is nonnegative and decreasingly sorted, the second case simplifies to \cref{eq:ineq_ell_1_polar}.
		For \cref{test:boundary_epi_epi}, we need to note that $\tilde{y} \in \mathbb{R}^{r-t+1}_{\geq 0}$ and therefore the conditions for $\tilde{y}_i=0$ and $\tilde{y}_i>0$ become
		\begin{align*}
		\tilde{y}_i =0\Leftrightarrow  \tilde{z}_i\in \left[0,\tfrac{\mu}{\tau}\right], \qquad \tilde{y}_i>0\Leftrightarrow \tilde{y}_i=\tilde{z}_i-\tfrac{\mu}{\tau}
		\end{align*}
		for all $i \in \{1,\ldots,r-t \}$. These equivalences also hold for $\tilde{y}_{r-t+1}$ with $\mu$ multiplied by $t/\sqrt{s+t}$. Therefore, \cref{eq:l1_tilde1,eq:l1_tilde2,eq:l1_w} follow together with the constraints $\tau \tilde{w} = \tilde{g}^D_{r,s,t}(\tilde{y})$ and $\mu \geq 0$. 
		Then, plugging \cref{eq:l1_tilde1,eq:l1_tilde2} into $\tau w^\opts = \tilde{g}^D_{r,s,t}(\tilde{y})$ yields
		\begin{align}
		0 &= \frac{1}{\tau}\sum_{i=1}^{r-t} |\tilde{y}_i| + \dfrac{t}{\sqrt{t+s}}|\tilde{y}_{r-t+1}| - \tilde{w} \notag \\ 
		&= \sum_{i=1}^{r-t} \max\left(\frac{\tilde{z}_i}{\gamma} - \frac{\mu}{\gamma^2},0 \right) + \max \left(\dfrac{t}{\sqrt{t + s}\gamma}\tilde{z}_{r-t+1} - \dfrac{t^2 \mu}{(t + s)\gamma^2},0 \right) +  z_v - \mu. \label{eq:max_express}
		\end{align}
		which determines the unique solution to $\mu \geq 0$.
	\end{subequations}
	We solve the equation by using a so-called \emph{break point searching algorithm}, as it has been done for similar problems in~\cite{held1974validation,duchi2008efficient,condat2016fast}. 
	
	In our case, the break points are given by the smallest values of $\mu$ for which each max expressions as function of $\mu$ becomes zero, i.e., 
	\begin{subequations}
		\begin{align*}
		\left( \gamma \tilde{z}_1, \dots, \gamma \tilde{z}_{r-t}, \frac{\gamma \sqrt{s+t}}{t} \tilde{z}_{r-t+1}  \right)
		\end{align*}
		Then we define 
		\begin{align*}
		\hat{z} :=\frac{1}{\gamma}\left(\tilde{z}_1,\ldots,\tilde{z}_j,\dfrac{t}{\sqrt{(t+s)}}\tilde{z}_{r-t+1},\tilde{z}_{j+1},\ldots,\tilde{z}_{r-t}\right),
		\end{align*}
		to be the vector that sorts $\frac{1}{\gamma}\left(\tilde{z}_1,\dots,\tilde{z}_{r-t},\frac{t}{\sqrt{t+s}} \tilde{z}_{r-t+1}\right)$ by decreasing break points, i.e., $j$ fulfills
		\begin{equation}
		\tilde{z}_j>\tfrac{\sqrt{(t+s)}}{t}\tilde{z}_{r-t+1}\geq \tilde{z}_{j+1} \quad \text{or} \quad \tilde{z}_{r-t}  \geq \tfrac{\sqrt{(t+s)}}{t}\tilde{z}_{r-t+1}.\label{eq:break_j}
		\end{equation}
		Therefore, \cref{eq:max_express}  can be equivalently written as 
		\begin{align}
		\sum_{i=1}^{r-t+1}\max(\hat{z}_{i}-\alpha_i\mu,0)  +z_v - \mu=0
		\label{eq:mu_eq}
		\end{align}
		with
		\begin{align*}
		\alpha=\frac{1}{\gamma^2}\left(1,\ldots,1,\dfrac{t^2}{(t+s)},1,\dots,1\right).
		\end{align*}
		Hence, there exists an index $k^\opts \in \{1,\dots,r-t+1\}$ such that the unique solution $\mu \geq 0$ to  \cref{eq:mu_eq}  fulfills 
		\begin{align}
		\hat{z}_{k^\opts}- \alpha_{k^\opts}\mu \geq 0{\textnormal{\qquad and \qquad}} \hat{z}_{i}-\alpha_{i}\mu < 0 \ \textnormal{ for all } \ i > k^\opts,
		\label{eq:mu_opt_cond}
		\end{align}
		which is why $\mu$ can be determined as
		\begin{align}
		\mu=\dfrac{z_v+\sum_{i=1}^{k^\opts}\hat{z}_{i}}{1+\sum_{i=1}^{k^\opts}\alpha_i}
		\label{eq:mu_opts}.
		\end{align}
	\end{subequations}
	Consequently, computing $\mu$ equals a search for $k^\opts \in \{1,\dots,r-t+1\}$ for which \cref{eq:mu_opts} satisfies \cref{eq:mu_opt_cond}. This can be done with the search rules in \cref{lem:break_point}.
	
	Finally, if $f(w) = \chi_{[0,\gamma]}(w)$, then \cref{test:in_epi_thm,test:zero_sol_thm,test:boundary_epi_thm} are given by \cref{test:in_epi_norm,test:boundary_epi_norm}. For $\tilde{g} = \ell_{\infty}$, this corresponds to \cref{eq:half} with $z_v = -1$, and  \cref{eq:l1_tilde1,eq:l1_tilde2} with the constraint that 
	\begin{align}
	\sum_{i=1}^{r-t+1}\max(\hat{z}_{i}-\alpha_i\mu,0)=\tau,
	\end{align}
	respectively. Therefore, 
	\begin{align}
	\hat{\mu}_k = \frac{\sum_{i=1}^{k}\hat{z}_{i}}{\sum_{i=1}^k\alpha_i} \qquad \text{and} \qquad \mu=\dfrac{\sum_{i=1}^{k^\opts}\hat{z}_{i}}{\sum_{i=1}^{k^\opts}\alpha_i}
	\end{align}
	and it is readily seen that $k^\opts$ obeys the same rules as in \cref{lem:break_point}.
\end{proof}

\end{document}